\newtheorem{thm}{Theorem}[section]
\theoremstyle{definition}
\newtheorem{cor}[thm]{Corollary}
\newtheorem{lem}[thm]{Lemma}
\newtheorem{prop}[thm]{Proposition}
\newtheorem{defn}[thm]{Definition}
\newtheorem{fact}[thm]{Fact}
\newtheorem*{thmA}{Theorem A}
\newtheorem*{thmB}{Theorem B}
\newtheorem*{thmC}{Theorem C}
\newtheorem*{thmD}{Theorem D}
\numberwithin{equation}{section}
\newcommand{\N}{\mathbb{N}}
\newcommand{\Z}{\mathbb{Z}}
\newcommand{\Q}{\mathbb{Q}}
\newcommand{\R}{\mathbb{R}}
\def \gcd {\operatorname{gcd}}
\newcommand{\Cal}{\mathcal}
\def \<{\langle}
\def \>{\rangle}
\def \((  {(\!(}
\def \)) {)\!)}
\begin{document}

\title[]
{When is scalar multiplication decidable?}

\author[P. Hieronymi]{Philipp Hieronymi}
\address
{Department of Mathematics\\University of Illinois at Urbana-Champaign\\1409 West Green Street\\Urbana, IL 61801}
\email{phierony@illinois.edu}
\urladdr{http://www.math.uiuc.edu/\textasciitilde phierony}

\subjclass[2010]{Primary 03B25,  Secondary 03C64, 11A67}

\thanks{The author was partially supported by NSF grant DMS-1300402 and by UIUC Campus Research Board award 14194.}
\date{\today}

\begin{abstract} Let $K$ be a subfield of $\R$. The theory of $\R$ viewed as an ordered $K$-vector space and expanded by a predicate for $\Z$ is decidable if and only if $K$ is a real quadratic field.
\end{abstract}
\maketitle

\section{Introduction}

It has long been known that the first order theory of the structure $(\R,<,+,\Z)$ is decidable. Arguably due to Skolem \cite{skolem}\footnote{See Smorny\'nski \cite[Exercise III.4.15]{smor}.}, the result can be deduced easily from B\"uchi's theorem on the decidability of monadic second order theory of one successor \cite{Buchi}\footnote{See Boigelot, Rassart and Wolper \cite{BRW}.}, and was later rediscovered independently by Weispfenning \cite{weis} and Miller \cite{ivp}. However, a consequence of G\"odel's famous first incompleteness theorem \cite{Goedel} states that when expanding  $(\R,<,+,\Z)$  by a symbol for multiplication on $\R$, the theory of the resulting structure $(\R,<,+,\cdot,\Z)$ becomes undecidable. This observation gives rise to the following natural and surprisingly still open question:

\begin{center}\emph{How many traces of multiplication can be added to $(\R,<,+,\Z)$ without making the first order theory undecidable?}
\end{center}

\noindent Here, building on earlier work of Hieronymi and Tychonievich \cite{HT} and Hieronymi \cite{H-Twosubgroups}, we will give a complete answer to this question when \emph{traces of multiplication} is taken to mean  scalar multiplication by certain irrational numbers. To make this statement precise: for $a\in \R$, let $\lambda_a:\R \to \R$ be the function that takes $x$ to $ax$. Denote the structure $(\R,<,+,\Z,\lambda_a)$ by $\Cal S_a$. A real number is quadratic if it is the solution to a quadratic equation with rational coefficients. Theorem B of \cite{HT} states that the theory of $\Cal S_a$ is undecidable if $a$ is not quadratic. In this paper we prove that $\Cal S_a$ is decidable if $a$ is quadratic. This establishes the following theorem.

\begin{thmA} The theory of $\Cal S_a$ is decidable if and only if $a$ is quadratic.
\end{thmA}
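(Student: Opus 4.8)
The plan is to dispose of the trivial case and then prove the two implications separately. If $a\in\Q$, then $\lambda_a$ is already definable in $(\R,<,+)$, so $\mathcal{S}_a$ reduces to the decidable structure $(\R,<,+,\Z)$ and there is nothing to do. So assume $a$ is irrational.

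\emph{Decidability when $a$ is quadratic.} The idea is to describe $\mathcal{S}_a$ by a numeration system on which everything is automaton-recognizable. Since $a$ is a real quadratic irrational, Lagrange's theorem on periodic continued fractions gives that the continued fraction $a=[a_0;a_1,a_2,\dots]$ is eventually periodic; I would therefore code a real number by the sequence of its Ostrowski digits relative to the convergent denominators $q_k$ (where $q_{k+1}=a_{k+1}q_k+q_{k-1}$), so that the digit alphabet and the admissibility conditions become eventually periodic, hence finitely describable. The work is then to check, atomic relation by atomic relation, that under this coding the order $<$, the graph of $+$, the predicate $\Z$, and the graph of $\lambda_a$ are all recognized by B\"uchi automata; recognizability of $\lambda_a$ comes from the fact that multiplication by $a$ acts on Ostrowski expansions by an eventually periodic transformation, again courtesy of Lagrange. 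Since B\"uchi-recognizable $\omega$-languages are effectively closed under Boolean combinations and projection \cite{Buchi}, every first-order formula of $\mathcal{S}_a$ translates effectively into such an automaton, and $\Th(\mathcal{S}_a)$ is decidable. (An equivalent route: replace $a$ by a quadratic Pisot number $\beta\in\Q(a)$ — for instance the fundamental unit $\epsilon>1$ of $\Q(a)$, whose Galois conjugate has absolute value $<1$ — note $\lambda_a(x)=px+q\lambda_\beta(x)$ for suitable $p,q\in\Q$, and run the same argument with greedy $\beta$-expansions and Frougny's normalization automata.) This sharpens the decidability half of \cite{H-Twosubgroups}: the extra difficulty is to recognize $\Z$ \emph{and} the scaling map $\lambda_a$ simultaneously by automata, which is precisely the point at which quadraticity — equivalently, eventual periodicity of the continued fraction — is indispensable.

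\emph{Undecidability when $a$ is not quadratic.} Now $1,a,a^2$ are $\Q$-linearly independent, and $\mathcal{S}_a$ defines the three incommensurable lattices $\Z$, $a\Z=\lambda_a(\Z)$ and $a^2\Z=\lambda_a(a\Z)$, together with the Beatty-type maps $n\mapsto\flr{na}$ and $n\mapsto\flr{na^2}$ on $\Z$. I would interpret $(\N,+,\cdot)$ in $\mathcal{S}_a$, which by G\"odel's theorem forces $\Th(\mathcal{S}_a)$ to be undecidable. Following the strategy of Hieronymi--Tychonievich \cite{HT}, the interpretation would code pairs of natural numbers by points of a definable, discretely structured dense subset of $\Z+a\Z$, and pin down the graph of multiplication by a first-order condition relating $\lambda_a$-scaling to simultaneous membership in the three lattices; the absence of any $\Q$-linear relation among $1,a,a^2$ is exactly what stops this condition from degenerating into the tame, automaton-theoretic picture of the quadratic case. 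Constructing and verifying this interpretation is the main obstacle of the whole proof. Since $a\Z$ is $\mathcal{S}_a$-definable, the reduct $(\R,<,+,\Z,a\Z)$ of $\mathcal{S}_a$ is available, so it in fact suffices to carry the interpretation out in that reduct, which is the route of \cite{H-Twosubgroups}. Putting the two implications together gives Theorem~A.
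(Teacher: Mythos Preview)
Your undecidability half is essentially what the paper does: for non-quadratic $a$ one invokes \cite[Theorem B]{HT}, which already shows that $\mathcal{S}_a$ defines full multiplication on $\R$. (A minor slip: the final reduction you mention is to \cite{HT}, not to \cite{H-Twosubgroups}; the latter contains only decidability results.)

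The decidability half, however, has a genuine gap and also differs strategically from the paper. You assert that ``recognizability of $\lambda_a$ comes from the fact that multiplication by $a$ acts on Ostrowski expansions by an eventually periodic transformation, again courtesy of Lagrange.'' Lagrange only tells you that the continued fraction of $a$ is eventually periodic; it does \emph{not} say that $x\mapsto ax$ acts on Ostrowski digits by anything like a shift or a finite-state map. Indeed, if $c=\sum_k b_{k+1}\beta_k$, then $ac=\sum_k b_{k+1}(a\beta_k)$, and $a\beta_k$ is in general a complicated $\Q(a)$-linear combination of several $\beta_j$'s, with carries to be normalized afterward. The one transformation that \emph{is} a clean $m$-shift is multiplication by $\zeta_1\cdots\zeta_{m+1}$ (not by $a$), and the paper has to work to pass from that to $\lambda_{\sqrt{d}}$, using in particular the arithmetic identities of Fact~\ref{fact:pq} and Corollary~\ref{cor:pq} relating the $p_k$'s and $q_k$'s; these identities are specific to $\sqrt{d}$ and do not follow from Lagrange alone. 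Your $\beta$-expansion alternative has the same problem: Frougny's normalization gives you addition, but not multiplication by an arbitrary quadratic $a$, without further argument.

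The paper's route is different and avoids rebuilding any automata. It first reduces from arbitrary quadratic $a$ to $a=\sqrt{d}$ for some $d\in\Q$, then quotes the already-established decidability of $\mathcal{R}_{\sqrt{d}}=(\R,<,+,\Z,\Z\sqrt{d})$ from \cite[Theorem~A]{H-Twosubgroups}. The entire technical content of the present paper (Theorem~D) is the converse definability: $\mathcal{R}_{\sqrt{d}}$ \emph{defines} $\lambda_{\sqrt{d}}$, so $\mathcal{S}_{\sqrt{d}}$ and $\mathcal{R}_{\sqrt{d}}$ have the same definable sets and hence the same (decidable) theory. In other words, the paper pushes the hard step into a definability argument inside a structure already known to be decidable, whereas you propose to redo the automata-theoretic decidability proof in a strictly larger language --- and the step you skip is precisely the step the paper spends Section~3 on.
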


\noindent By Theorem A of \cite{H-Twosubgroups}, the theory of the structure $(\R,<,+,\Z,\Z a)$ is decidable whenever $a$ is quadratic. Here, we will show how the decidablity of the theory of $\Cal S_a$ can be deduced from this result. Before explaining the precise strategy of the proof, we collect two corollaries of Theorem A.\newline

\noindent Theorem A induces a dichotomy for expansions of $(\R,<,+,\Z)$ by scalar multiplication by a single real number. This raises the question whether there is a similar characterization for expansions where scalar multiplication is added for every element of some subset of $\R$.
We say that real numbers $a_1,\dots,a_n$ are $\Q$-linear dependent if there are $q_1,\dots, q_n\in \Q$, not all zero, such that
\[
q_1 a_1 + \dots + q_n a_n = 0.
\]
We say $a_1,\dots, a_n$ are $\Q$-linear independent if they are not $\Q$-linear dependent. Theorem C of \cite{HT} states that the structure $(\R,<,+,\Z,\Z a,\Z b)$ defines full multiplication on $\R$ whenever $1,a,b$ are $\Q$-linear independent. Since $(\R,<,+,\Z,\lambda_a,\lambda_b)$ defines both $\Z a$ and $\Z b$, it also defines full multiplication on $\R$ whenever $1,a,b$ are $\Q$-linear independent. On the other hand, if $a,b$ are irrational and $1,a,b$ are $\Q$-linear dependent, then either $\Cal S_{a}$ defines the function $\lambda_b$. With this observation we get the following result as a corollary of Theorem A and \cite[Theorem B]{HT}.

\begin{thmB} Let $S\subseteq \R$. Then the structure $(\R,<,+,\Z,(\lambda_b)_{b\in S})$ defines the same sets as exactly one of the following structures:
\begin{itemize}
\item[(i)] $(\R,<,+,\Z)$,
\item[(ii)] $(\R,<,+,\Z,\lambda_a)$, for some quadratic $a\in \R\setminus \Q$,
\item[(iii)] $(\R,<,+,\cdot,\Z)$.
\end{itemize}
\end{thmB}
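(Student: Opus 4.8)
The plan is to derive Theorem~B from Theorem~A and \cite{HT} by a case analysis on the $\Q$-linear span $V$ of $S\cup\{1\}$; essentially no new mathematics is needed, and the argument is mostly organizational. At the outset I would fix the reading of ``defines the same sets'' as ``has the same definable sets with parameters'': this is forced, since the constant $b$ (hence $\lambda_b$) need not be $\emptyset$-definable in $(\R,<,+,\cdot,\Z)$ even though $\{(x,y):y=bx\}$ is parameter-definable there. Then I would record two routine reductions. First, multiplication by a fixed rational is $\emptyset$-definable in $(\R,<,+)$, so the symbols $\lambda_q$ with $q\in\Q$ may be discarded. Second, if $b=p+qa$ with $p,q\in\Q$ then $\lambda_b(x)=px+\lambda_a(qx)$, so $\lambda_b$ is definable from $\lambda_a$ (this is the introduction's observation for $\Q$-linearly dependent triples, though here $\Z$ is not even needed). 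Conversely, each $\lambda_b$ is parameter-definable in $(\R,<,+,\cdot)$, so $(\R,<,+,\Z,(\lambda_b)_{b\in S})$ is always a reduct of $(\R,<,+,\cdot,\Z)$ in the parameter sense.

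Next I would split on $d:=\dim_\Q V$. If $d=1$ then $S\subseteq\Q$, and by the first reduction $(\R,<,+,\Z,(\lambda_b)_{b\in S})$ has the same definable sets as $(\R,<,+,\Z)$: case~(i). If $d=2$, pick any $a\in S\setminus\Q$, note $V=\Q\oplus\Q a$, and use the second reduction to see that $(\R,<,+,\Z,(\lambda_b)_{b\in S})$ and $\mathcal{S}_a=(\R,<,+,\Z,\lambda_a)$ have the same definable sets (any two irrational elements of $V$ generate the same subfield, so whether $a$ is quadratic does not depend on the choice); if $a$ is quadratic this is case~(ii), and if not, then $\mathcal{S}_a$ \emph{defines} multiplication, so combined with the last remark of the previous paragraph this is case~(iii). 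If $d\geq 3$, the image of $S$ in $V/\Q$ spans a space of dimension $\geq 2$, so $S$ contains $a,b$ with $1,a,b$ linearly independent over $\Q$; by the introduction's remark $(\R,<,+,\Z,\lambda_a,\lambda_b)$ defines $\Z a$ and $\Z b$, hence by \cite[Theorem~C]{HT} defines multiplication, and as before this is case~(iii). Finally I would check ``exactly one'': (iii) is distinct from (i) and (ii) since $(\R,<,+,\cdot,\Z)$ has undecidable theory while the other two are decidable (classically, and by Theorem~A), and (i) is distinct from (ii) since $(\R,<,+,\Z)$ does not define $\lambda_a$ for irrational $a$.

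The one place where care is genuinely required — the reason this is not entirely formal — is the assertion that in the $d=2$ non-quadratic case and in the $d\geq 3$ case the relevant structure \emph{defines} multiplication, rather than merely having undecidable theory. For $d\geq 3$ this is exactly \cite[Theorem~C]{HT}. For $d=2$ I would extract it either from \cite[Theorem~B]{HT} or from the constructive half of the proof of Theorem~A, whose ``only if'' direction must produce an actual definition of multiplication and not just a reduction of an undecidable problem. So the main obstacle is entirely localized in those already-available results; once they are in hand, the remainder is bookkeeping.
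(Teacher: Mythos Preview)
Your proposal is correct and follows essentially the same route as the paper's own argument (sketched in the paragraph preceding Theorem~B): split on the $\Q$-linear span of $S\cup\{1\}$, invoke \cite[Theorem~B]{HT} for the non-quadratic $d=2$ case and \cite[Theorem~C]{HT} for $d\ge 3$, and use Theorem~A for the decidability side of exclusivity. Two small differences worth noting: you are more careful than the paper about the parameters issue (the paper's global convention is $\emptyset$-definability, but as you observe case~(iii) only works with parameters), and for separating (i) from (ii) the paper appeals instead to the fact that $(\R,<,+,\Z)$ defines no dense and codense subset of $\R$, citing \cite{weis,ivp}---your bare assertion that $\lambda_a$ is not definable there would ultimately rest on the same quantifier-elimination results.
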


\noindent The three cases are indeed exclusive. By Theorem A, a structure in (ii) does not define full multiplication on $\R$. Using the results in \cite{weis} or \cite{ivp}, one can show that $(\R,<,+,\Z)$ does not define any dense and codense subset of $\R$, while the structures in (ii) do\footnote{For a definable dense set in $\Cal S_a$, see \cite[Proof of Theorem C]{HT}.}. As a corollary of Theorem B, we obtain the following generalization of Theorem A.

\begin{thmC} Let $K$ be a subfield of $\R$.  The theory of the ordered $K$-vector space $\R$ expanded by a predicate for $\Z$ is decidable if and only if $K$ is a quadratic field.
\end{thmC}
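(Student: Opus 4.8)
The structure under consideration is, up to interdefinability, nothing but $\Cal S:=(\R,<,+,\Z,(\lambda_b)_{b\in K})$: scalar multiplication by a rational $p/q$ is already definable in $(\R,<,+)$, so the ordered $K$-vector space with a predicate for $\Z$ has exactly the same definable sets as $\Cal S$. Thus Theorem C is a restatement of Theorem B applied to $S=K$, and the plan is simply to read off, in terms of the dimension of $K$ as a $\Q$-vector space, which of the cases (i)--(iii) occurs. First I would record the elementary algebraic observation that $K$ is a quadratic field (with $K=\Q$ included as the degenerate case) if and only if $\dim_\Q K\le 2$, if and only if $K=\Q$ or $K=\Q(\sqrt d)$ for some positive non-square $d\in\Q$, if and only if no three elements $1,a,b$ with $a,b\in K$ are $\Q$-linearly independent; the only point here is that a field in which every element is quadratic cannot contain two elements generating distinct quadratic subfields, since their compositum would be a degree-$4$ extension and would contain an element of degree $4$.

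Now the case split. If $\dim_\Q K\le 1$, then $K=\Q$ and every $\lambda_b$ is already definable, so $\Cal S$ has the same definable sets as $(\R,<,+,\Z)$: case (i). If $\dim_\Q K=2$, fix any $a\in K\setminus\Q$; then $\{1,a\}$ is a $\Q$-basis of $K$, so for $b=p+qa\in K$ the function $\lambda_b$ is definable in $\Cal S_a$ from $\lambda_a$ together with the $\Q$-scalar multiplications, while $\lambda_a$ itself is among the functions of $\Cal S$; hence $\Cal S$ and $\Cal S_a$ are interdefinable and we are in case (ii) with $a$ quadratic. If $\dim_\Q K\ge 3$, choose $a,b\in K$ with $1,a,b$ $\Q$-linearly independent; then $\Cal S$ defines $\lambda_a$ and $\lambda_b$, hence by \cite[Theorem C]{HT} it defines full multiplication, so $\Cal S$ has the same definable sets as $(\R,<,+,\cdot,\Z)$: case (iii). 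For decidability, cases (i) and (ii) give decidable theories --- the former by the classical results recalled in the introduction, the latter by Theorem A since $a$ is quadratic --- whereas case (iii) is undecidable by G\"odel. As "quadratic field" corresponds exactly to the first two cases, this yields the stated equivalence.

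I do not expect a genuine obstacle here: all the mathematical weight sits in Theorems A and B, which we may assume, and Theorem C only repackages them. The one point that deserves care is effectivity, because the language of the ordered $K$-vector space has a function symbol for each element of $K$ and so is infinite when $K\ne\Q$; to pass from "$\Cal S$ has the same definable sets as $\Cal S_a$" to "the theory of $\Cal S$ is decidable" one rewrites a sentence by replacing each symbol $\lambda_b$ with the term $p\,x+q\,\lambda_{\sqrt d}(x)$, where $b=p+q\sqrt d$ with $p,q\in\Q$ computed from $b$ via the standard presentation of $K=\Q(\sqrt d)$, and then replaces $p\,x$ and $q\,x$ by their $(\R,<,+)$-definitions; this is a computable map on sentences preserving truth in $\R$, reducing $\Th(\Cal S)$ to the decidable $\Th(\Cal S_{\sqrt d})$. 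Case (i) is handled the same way over $(\R,<,+,\Z)$, and in case (iii) $\Cal S$ effectively interprets $(\N,+,\cdot)$, so $\Th(\Cal S)$ is undecidable. Besides this bookkeeping, the proof is immediate from what precedes.
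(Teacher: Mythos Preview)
Your proof is correct and follows exactly the route the paper intends: the paper presents Theorem~C as nothing more than Theorem~B ``restated in terms of vector spaces'', and you have simply spelled out that restatement by applying Theorem~B to $S=K$ and reading off the three cases via $\dim_\Q K$. Your added paragraph on effectivity (the computable translation of $\lambda_b$-symbols into $\Cal S_{\sqrt d}$-terms) is a point the paper glosses over entirely and is worth keeping, since interdefinability alone does not give a decidability transfer without it; the remark about compositums of distinct quadratic subfields is harmless but not needed, as the equivalence between ``$K$ quadratic'' and ``$\dim_\Q K\le 2$'' is immediate from the definition.
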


\noindent The work in this paper is mainly motivated by purely foundational concerns. However, the structure $(\R,<,+,\Z)$ and the decidability of its first order theory have been used extensively in computer science, in particular in verification and model checking. Our Theorem A gives decidability in a larger language, and one might hope that the increase in expressive power leads to new applications; see Hieronymi, Nguyen, Pak \cite{HNP} for an application to decision problems in discrete geometry. One should mention however that for irrational $a$, the structure $\Cal S_a$ defines a model of the monadic second order theory of one successor by \cite[Theorem D]{H-Twosubgroups}. Thus any implementation of the algorithm determining the truth of a sentence in $\Cal S_a$ is limited by the high computational costs necessary to decide a statement in the monadic second order theory of one successor\footnote{Most of the computational complexity comes from the construction of the complement of a B\"uchi automaton. For details, see for example Vardi \cite{Vardi}. When considering just $(\R,<,+,\Z)$, some of the difficulties can be avoided, see Boigelot, Jodogne and Wolper \cite{BJW}.}.   \\

\noindent We have already argued how Theorem A implies Theorem B and Theorem C. Theorem A itself is deduced from the following result.

\begin{thmD} Let $d\in \Q$. Then $(\R,<,+,\Z,\Z\sqrt{d})$ defines multiplication by $\sqrt{d}$.
\end{thmD}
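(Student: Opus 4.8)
The plan is first to reduce to a clean algebraic situation, then to reduce the whole statement to defining multiplication by $\sqrt d$ on a dense definable subring, and finally to bootstrap to all of $\R$ by a closure argument. For the reduction: if $d$ is a square in $\Q$ then $\sqrt d\in\Q$ and $\lambda_{\sqrt d}$ is already definable in $(\R,<,+)$, so assume not, and write $d=e(r/s)^2$ with $e\in\Z$ squarefree and $\ge 2$ and $r,s\in\Z_{>0}$. Then $\lambda_{\sqrt d}=\lambda_{r/s}\circ\lambda_{\sqrt e}$, and $\Z\sqrt e=(s/r)\Z\sqrt d$ is definable from $\Z\sqrt d$ since scaling by a fixed rational is definable in $(\R,<,+)$; hence it suffices to treat $d$ squarefree in $\Z$ with $d\ge 2$. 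I then set $R:=\Z+\Z\sqrt d$, a definable dense subgroup of $\R$ which, as $d\in\Z$, is the subring $\Z[\sqrt d]$. Since $\sqrt d$ is irrational, each $z\in R$ has a unique representation $z=m+n\sqrt d$ with $m,n\in\Z$, and $m$ is definable from $z$ (the unique integer with $z-m\in\Z\sqrt d$); so the conjugation $\sigma\colon R\to R$, $m+n\sqrt d\mapsto m-n\sqrt d$, is definable via $\sigma(z)=2m-z$.

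Next I would reduce the theorem to defining the single set $H:=\{(z,\sqrt d\,z):z\in R\}$. Indeed, as $R$ is dense in $\R$ and $\lambda_{\sqrt d}$ is continuous, $H$ is dense in the closed set $\{(x,\sqrt d\,x):x\in\R\}$, which is therefore the topological closure of $H$, and the closure of a definable set is definable in any expansion of $(\R,<,+)$. Thus everything comes down to defining, inside the definable set $R\times R'$ where $R':=\sqrt d\,R=d\Z+\Z\sqrt d$ (which is definable), the graph of the additive bijection $R\to R'$ with $1\mapsto\sqrt d$ and $\sqrt d\mapsto d$. Equivalently, since consecutive elements of $\Z\sqrt d$ are more than $1$ apart, so that $m\sqrt d$ is the unique element of $\Z\sqrt d$ in $[\lfloor m\sqrt d\rfloor,\lfloor m\sqrt d\rfloor+1)$, it is enough to define the Beatty sequence $m\mapsto\lfloor m\sqrt d\rfloor$ on $\Z$.

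For the core step I would exploit conjugation together with the order. One checks that the additive, strictly increasing bijections $R\to R'$ are precisely multiplication by $\sqrt d\,\varepsilon^{k}$, $k\in\Z$, where $\varepsilon>1$ is the fundamental unit of $\Z[\sqrt d]$; and among these, $\lambda_{\sqrt d}$ is singled out by the conjugation-symmetry $\sigma\circ\phi=-\,\phi\circ\sigma$, because a short computation using $N(\varepsilon)=\pm1$ (i.e. Pell's equation) forces $k=0$. The hard part — which I expect to be the genuine obstacle — is that this is an \emph{abstract uniqueness} statement, and converting it into a bona fide first-order definition of $H$ is not automatic: a naive formalization quantifies over the graph $\phi$ (equivalently, over the cyclic subgroup of $R\times R$ generated by $(1,\sqrt d)$), which first-order logic does not permit. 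I expect the resolution to use essentially that $\sqrt d$ is \emph{quadratic}, namely the eventual periodicity of its continued fraction expansion: a descent driven by the periodic algorithm should close up after a fixed (period-length) number of steps and so yield an honest first-order formula; alternatively one can appeal to the methods of \cite{HT}. Granting this, $H$ is definable, hence so is its closure, which is the graph of $\lambda_{\sqrt d}$ in the squarefree-integer case, and composing with the definable map $\lambda_{r/s}$ then gives $\lambda_{\sqrt d}$ for arbitrary rational $d$, proving Theorem~D.
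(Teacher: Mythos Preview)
Your outer scaffolding is fine and largely matches the paper: the reduction to non-square $d$, the density of $R=\Z+\Z\sqrt d$, the closure argument to pass from a dense definable set to all of $\R$, and the observation that everything boils down to defining $n\mapsto n\sqrt d$ on $\Z$ (equivalently the Beatty sequence $n\mapsto\lfloor n\sqrt d\rfloor$). That last reduction is essentially the paper's Corollary~\ref{cor:restoN}, and once one has it your bootstrapping via $R$ would indeed finish the proof.

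The genuine gap is the step you yourself flag. Your unit/conjugation argument correctly shows that $\lambda_{\sqrt d}|_R$ is the \emph{unique} order-preserving additive bijection $R\to\sqrt d\,R$ satisfying $\sigma\circ\phi=-\phi\circ\sigma$, but this is a second-order characterization and you do not convert it into a first-order definition. The sentence ``a descent driven by the periodic algorithm should close up after a fixed number of steps'' is not a proof, and the appeal to \cite{HT} points at the wrong paper: \cite{HT} handles the non-quadratic (undecidable) side; the relevant machinery lives in \cite{H-Twosubgroups} and is substantially extended here. Concretely, what you are missing is the entire content of Section~3: one must show that the Ostrowski representation based on $\sqrt d$ is definable in $\Cal R_{\sqrt d}$ (imported from \cite{H-Twosubgroups}), build the definable objects $C$, $T$, $R$, $S$ that encode Ostrowski digit strings and the shift-by-one on them, and then combine the shift with the arithmetic identity of Corollary~\ref{cor:pq} (which expresses $q_{km+i}$ as a fixed $\Q$-linear combination of $p_{km+i+1}$ and $p_{km+i}$, using the special shape $[a_0;\overline{a_1,\dots,a_1,2a_0}]$ of the expansion of $\sqrt d$) to produce the map $n\sqrt d\mapsto n$ as in Lemma~\ref{lem:restoN}. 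None of this follows from the abstract uniqueness you establish, and it is precisely where the quadraticity of $\sqrt d$ is used in an effective, first-order way. As written, your proposal proves the theorem modulo its hardest part.
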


\noindent The proof of Theorem D is the goal of this paper and the only significant improvement over previous results. We now explain how Theorem A can be proved using Theorem D.

\begin{proof}[Proof of Theorem A from Theorem D]
By \cite[Theorem B]{HT}, the theory of $\Cal S_a$ is undecidable whenever $a$ is not quadratic. To establish Theorem A, it is therefore enough to show that the theory of $\Cal S_a$ is decidable for quadratic $a$. 
Let $a\in \R$ be quadratic. Then there are $b,c,d\in \Q$ such that
$a= b + c\sqrt{d}$. By \cite[Theorem A]{H-Twosubgroups} the theory of $(\R,<,+,\Z,\Z\sqrt{d})$ is decidable. By Theorem D, the function $\lambda_{\sqrt{d}}$ is definable in $(\R,<,+,\Z,\Z\sqrt{d})$. Since $a=b+c\sqrt{d}$, $\lambda_a$ is definable in this structure as well. The decidability of the theory of $\mathcal S_a$ follows. 
\end{proof}

For ease of notation, we denote $(\R,<,+,\Z,\Z a)$ by $\Cal R_{a}$. Theorem D is not the first results of this form. Let $\varphi:=\frac{1+\sqrt{5}}{2}$ be the golden ratio. Then \cite[Theorem B]{H-Twosubgroups} states that $\Cal R_{\varphi}$ defines $\lambda_\varphi$. The proof of this result depends heavily on the fact that the continued fraction expansion of $\varphi$ is $[1;1,\dots]$. To prove Theorem D, we build on this earlier work in \cite{H-Twosubgroups}, but have to add extra arguments coming both from the theory of continued fractions and from definability. In Section 4 of \cite{H-Twosubgroups} it is shown that the representations in the Ostrowski numeration system based on $a$ of both natural numbers and real numbers are definable in $\Cal R_{\sqrt{d}}$. The Ostrowski numeration system is a non-standard way to represent numbers based on the continued fraction expansion of $a$. In Section 2 we recall the basic definitions and results about this numeration system. In Section 3, after reminding the reader of some of the previous results in \cite{H-Twosubgroups}, we prove that $\lambda_{\sqrt{d}}$ is definable in $\Cal R_{\sqrt{d}}$. The main step in the proof is to realize that using theorems about the continued fraction expansions of $\sqrt{d}$, multiplication by $\sqrt{d}$ can be expressed in terms of certain shifts in the Ostrowski representations and scalar multiplication by rational numbers. Most of the work in Section 3 will go into showing that these shifts are definable.

\subsection*{Acknowledgements} The author thanks Evgeny Gordon for interest in this work and helpful comments, Alexis Bès, Bernarnd Boigelot, Véronique Bruyère, Christian Michaux and Françoise Point for pointing out references, and the anonymous referee for carefully reading this paper.

\subsection*{Notation} We denote $\{0,1,2,\dots\}$ by $\N$. Throughout this paper definable will mean definable without parameters.

\section{Continued fractions} In this section, we recall some basic and well-known definitions and results about continued fractions. Expect for the definition of Ostrowski representations of real numbers, all these results can be found in every basic textbook on continued fractions. We refer the reader to Rockett and Szüsz \cite{RS} for proofs of the results, simply because to the author's knowledge it is the only book discussing Ostrowski representations of real numbers in detail. \newline

\noindent A \textbf{finite continued fraction expansion} $[a_0;a_1,\dots,a_k]$ is an expression of the form
\[
a_0 + \frac{1}{a_1 + \frac{1}{a_2+ \frac{1}{\ddots +  \frac{1}{a_k}}}}
\]
For a real number $a$, we say $[a_0;a_1,\dots,a_k,\dots]$ is \textbf{the continued fraction expansion of $a$} if $a=\lim_{k\to \infty}[a_0;a_1,\dots,a_k]$ and $a_0\in \Z$, $a_i\in \N_{>0}$ for $i>0$. For the rest of this subsection, fix a positive irrational real number $a$ and assume that $[a_0;a_1,\dots,a_k,\dots]$ is the continued fraction expansion of $a$.

\begin{defn}\label{def:beta} Let $k\geq 1$. We define $p_k/q_k \in \Q$ to be the \textbf{$k$-th convergent of $a$}, that is the quotient $p_k/q_k$ where $p_k\in \N$, $q_k\in \Z$, $\gcd(p_k,q_k)=1$ and 
\[
\frac{p_k}{q_k} = [a_0;a_1,\dots,a_k].
\]
The \textbf{$k$-th difference of $a$} is defined as $\beta_k := q_k a - p_k$. We define $\zeta_k \in \R$ to be the \textbf{$k$-th complete quotient of $a$}, that is
$\zeta_k = [a_k;a_{k+1},a_{k+2},\dots]$.
\end{defn}

\noindent Maybe the most important fact about the convergents we will use, is that both their nominators and denominators satisfy the following recurrence relation.

\begin{fact}{\cite[Chapter I.1 p. 2]{RS}}\label{fact:recursive} Let $q_{-1} := 0$ and $p_{-1}:=1$. Then $q_{0} = 1$, $p_{0}=a_0$ and for $k\geq 0$,
\begin{align*}
q_{k+1} &= a_{k+1} \cdot q_k + q_{k-1}, \\
p_{k+1} &= a_{k+1} \cdot p_k + p_{k-1}. \\
\end{align*}
\end{fact}
\noindent We directly get that for $k\geq 0$, $\beta_{k+1} = a_{k+1} \beta_k + \beta_{k-1}$. We need the following well-known facts about $\zeta_k$.

\begin{fact}{\cite[Chapter I.4  p. 9]{RS}} \label{fact:beta} Let $k\in \N_{>0}$. Then
$
\beta_{k+1} = - \frac{\beta_k}{\zeta_{k+2}}.
$
\end{fact}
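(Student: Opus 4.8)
\emph{Proof plan.} The plan is to deduce the identity from the single auxiliary relation
\[
\zeta_{k+1}\,\beta_k \;=\; -\,\beta_{k-1}\qquad\text{for all }k\ge 0,
\]
which I would prove by induction on $k$; Fact~\ref{fact:beta} is then obtained by replacing $k$ by $k+1$ (valid for $k\ge 1$) and dividing through by $\zeta_{k+2}$, which is a real number $\ge 1$, so the division is harmless.

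First I would assemble the ingredients already in hand. From Definition~\ref{def:beta} and the initial conditions of Fact~\ref{fact:recursive}, $\beta_{-1}=q_{-1}a-p_{-1}=-1$ and $\beta_0=q_0a-p_0=a-a_0$; the recurrence $\beta_{k+1}=a_{k+1}\beta_k+\beta_{k-1}$ for $k\ge 0$ was noted right after Fact~\ref{fact:recursive}; and from the definition of the complete quotients one has $\zeta_m=[a_m;a_{m+1},\dots]=a_m+\zeta_{m+1}^{-1}$ for every $m\ge 0$, in particular $a=\zeta_0=a_0+\zeta_1^{-1}$, i.e.\ $a-a_0=\zeta_1^{-1}$. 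The base case $k=0$ of the auxiliary relation is then immediate: $\zeta_1\beta_0=\zeta_1(a-a_0)=1=-\beta_{-1}$. For the inductive step, assuming the relation for $k$, I would substitute $\beta_{k-1}=-\zeta_{k+1}\beta_k$ into $\beta_{k+1}=a_{k+1}\beta_k+\beta_{k-1}$ to get $\beta_{k+1}=(a_{k+1}-\zeta_{k+1})\beta_k$; since $\zeta_{k+1}=a_{k+1}+\zeta_{k+2}^{-1}$ one has $a_{k+1}-\zeta_{k+1}=-\zeta_{k+2}^{-1}$, and hence $\zeta_{k+2}\beta_{k+1}=-\beta_k$, which is the relation with $k$ replaced by $k+1$. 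This closes the induction, and restricted to $k\ge 1$ it reads exactly $\beta_{k+1}=-\beta_k/\zeta_{k+2}$.

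I do not expect any real obstacle; the only point requiring care is the index bookkeeping at the bottom of the induction --- that $\beta_{-1}=-1$, that the $\beta$-recurrence is valid already at $k=0$, and that $\zeta_{k+2}\ge 1>0$ so that passing from $\zeta_{k+2}\beta_{k+1}=-\beta_k$ to $\beta_{k+1}=-\beta_k/\zeta_{k+2}$ is legitimate (here one uses that the continued fraction expansion is infinite, which holds in the cases of interest, e.g.\ for irrational $a$). An alternative of comparable length would be to first establish the classical formula $a=(\zeta_{k+1}p_k+p_{k-1})/(\zeta_{k+1}q_k+q_{k-1})$ by induction from Fact~\ref{fact:recursive} and then clear denominators, giving $\zeta_{k+1}(aq_k-p_k)=p_{k-1}-aq_{k-1}$, i.e.\ the auxiliary relation; but I would favour the direct induction above, since it reuses only facts already recorded in the excerpt.
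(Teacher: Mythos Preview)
Your argument is correct: the induction on the auxiliary identity $\zeta_{k+1}\beta_k=-\beta_{k-1}$ goes through exactly as you describe, using only $\beta_{-1}=-1$, $\beta_0=a-a_0=\zeta_1^{-1}$, the recurrence $\beta_{k+1}=a_{k+1}\beta_k+\beta_{k-1}$, and $\zeta_{k+1}=a_{k+1}+\zeta_{k+2}^{-1}$. Note that the paper gives no proof of this statement at all---it is recorded as a \emph{Fact} with a reference to \cite[Chapter~I.4 p.~9]{RS}---so there is nothing to compare your approach to; you have simply supplied a clean self-contained proof where the paper defers to the literature. (Incidentally, your induction in fact yields the identity for all $k\ge 0$, not just $k\ge 1$.)
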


\begin{fact}{\cite[Chapter I.2 p. 4]{RS}} \label{fact:zetaplus1} Let $k\in \N_{>0}$. Then
\[
\zeta_k = a_{k} + \frac{1}{\zeta_{k+1}}.
\]
\end{fact}

\begin{fact}{\cite[Chapter I.2 p. 4]{RS}}\label{fact:zetaexp} Let $k\in \N_{>0}$. Then
\[
a= 
\frac{p_{k}\zeta_{k+1} + p_{k-1}}{q_{k}\zeta_{k+1} + q_{k-1}}.
\]
\end{fact}

\noindent We will now introduce a numeration system due to Ostrowski \cite{Ost}.

\begin{fact}{\cite[Chapter II.4  p. 24]{RS}}\label{ostrowski} Let $N\in \N$. Then $N$ can be written uniquely as
\[
N = \sum_{k=0}^{n} b_{k+1} q_{k},
\]
where $n\in \N$ and the $b_k$'s are in $\N$ such that $b_1<a_1$ and for all $k\in \N_{\leq n}$, $b_k \leq a_{k}$ and, if $b_k = a_{k}$, then $b_{k-1} = 0$.
\end{fact}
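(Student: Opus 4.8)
The plan is to prove existence and uniqueness of the Ostrowski representation by a single induction that simultaneously controls the range of representable numbers. First I would record that the denominators $q_k$ are strictly increasing to infinity: since $a_i\geq 1$ for $i\geq 1$, Fact \ref{fact:recursive} gives $q_{k+1}=a_{k+1}q_k+q_{k-1}\geq q_k+q_{k-1}$, so $(q_k)$ grows at least as fast as the Fibonacci sequence. Hence for every $N\in\N$ there is a unique $m\geq 0$ with $q_m\leq N<q_{m+1}$, and the index $n$ in the statement can be taken to be this $m$ after discarding leading zero digits.

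The heart of the argument is the following claim, proved by induction on $m\geq 0$: the map sending a tuple $(b_1,\dots,b_{m+1})$ with $b_1<a_1$, $0\leq b_k\leq a_k$ for $k\leq m+1$, and $b_{k-1}=0$ whenever $b_k=a_k$, to the value $\sum_{k=1}^{m+1} b_k q_{k-1}$ is a bijection onto $\{0,1,\dots,q_{m+1}-1\}$. For $m=0$ this just says $b_1\mapsto b_1$ is a bijection from $\{0,\dots,a_1-1\}$ onto $\{0,\dots,q_1-1\}$, which is immediate since $q_1=a_1$. For the inductive step I would split on the top digit $b_{m+1}$. If $b_{m+1}<a_{m+1}$, the digit $b_m$ is unconstrained by the carry rule, so by the induction hypothesis the lower part $\sum_{k=1}^{m}b_k q_{k-1}$ ranges bijectively over $\{0,\dots,q_m-1\}$; combined with $b_{m+1}\in\{0,\dots,a_{m+1}-1\}$ this yields exactly $\{0,\dots,a_{m+1}q_m-1\}$, with $b_{m+1}$ and the remainder recoverable as quotient and remainder modulo $q_m$. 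If $b_{m+1}=a_{m+1}$, then $b_m=0$ is forced, and the remaining tuple $(b_1,\dots,b_{m-1})$ ranges bijectively over $\{0,\dots,q_{m-1}-1\}$ by the induction hypothesis applied at level $m-1$, so these values fill out $\{a_{m+1}q_m,\dots,a_{m+1}q_m+q_{m-1}-1\}$. By Fact \ref{fact:recursive} the two intervals are disjoint and their union is $\{0,\dots,q_{m+1}-1\}$, which gives both surjectivity and injectivity at level $m$.

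Finally, given $N\in\N$, choose $m$ with $N<q_{m+1}$, apply the claim to obtain a unique valid tuple $(b_1,\dots,b_{m+1})$, and let $n$ be the largest index with $b_{n+1}\neq 0$ (or $n=0$ if $N=0$); this produces the representation in the statement, and uniqueness is inherited from the claim once one observes that the only remaining ambiguity is padding by leading zero digits, which is excluded by the shape of the displayed sum. The step I expect to require the most care is the case $b_{m+1}=a_{m+1}$ of the inductive step: it is precisely there that the carry condition $b_k=a_k\Rightarrow b_{k-1}=0$ is needed, and that the identity $q_{m+1}=a_{m+1}q_m+q_{m-1}$ makes the forced remainder range $\{0,\dots,q_{m-1}-1\}$ match exactly the level-$(m-1)$ instance of the claim; without this bookkeeping one would either lose surjectivity or double-count.
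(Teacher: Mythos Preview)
The paper does not prove this statement; it is recorded as a \emph{Fact} quoted from Rockett and Sz\"usz \cite[Chapter~II.4]{RS}, so there is no proof in the paper to compare against. Your argument is the standard one and is essentially correct: inductively show that the admissible digit strings of length $m+1$ are in bijection with $\{0,\dots,q_{m+1}-1\}$, splitting on whether the top digit is maximal and using the recurrence $q_{m+1}=a_{m+1}q_m+q_{m-1}$ to see that the two resulting intervals are disjoint and exhaustive.

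One small bookkeeping slip: when $b_{m+1}=a_{m+1}$ you invoke ``the induction hypothesis applied at level $m-1$'' to conclude that $(b_1,\dots,b_{m-1})$ ranges over $\{0,\dots,q_{m-1}-1\}$. In your own indexing, stage $\ell$ concerns tuples $(b_1,\dots,b_{\ell+1})$ and the interval $\{0,\dots,q_{\ell+1}-1\}$, so the instance you actually need here is stage $m-2$, not $m-1$. Relatedly, at $m=1$ this case degenerates to the empty tuple and the singleton $\{0,\dots,q_0-1\}=\{0\}$, which you should either state as a separate base case or fold into a convention for stage $-1$. None of this affects the substance of the proof.
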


\noindent We call the representation of a natural number $N$ given by Fact \ref{ostrowski} the \textbf{Ostrowski representation} of $N$ based on $a$. Of course, we will drop the reference to $a$ whenever $a$ is clear from the context. If $\varphi$ is the golden ratio, the Ostrowski representation based on $\varphi$ is better known as the \textbf{Zeckendorf representation}, see Zeckendorf \cite{Zeckendorf}. We will also need a similar representation of a real number.

\begin{fact}\cite[Chapter II.6  Theorem 1]{RS}
\label{ostrowskireal} Let $c \in \R$ be such that $-\frac{1}{\zeta_1} \leq c < 1-\frac{1}{\zeta_1}$. Then $c$ can be written uniquely in the form
\[
c = \sum_{k=0}^{\infty} b_{k+1} \beta_{k},
\]
where $b_k \in \N$, $0 \leq b_1 < a_1$, $0 \leq b_k \leq a_{k}$, for $k> 1$, and $b_k = 0$ if $b_{k+1} = a_{k+1}$, and $b_k < a_{k}$ for infinitely many odd $k$.
\end{fact}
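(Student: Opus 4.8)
Since the complete quotients $\zeta_k$, and hence the sum, are infinite, we may assume $a\notin\Q$, so that every $\beta_k$ is nonzero; note also $\beta_0=a-a_0=1/\zeta_1\in(0,1)$, so that the hypothesis on $c$ reads $-\beta_0\le c<1-\beta_0$. The plan is a greedy construction for existence together with a matching first-difference argument for uniqueness, both powered by a ``tiling'' of $[-\beta_0,1-\beta_0)$ by translated subintervals indexed by the admissible digits. The inputs are the standard facts $\beta_{-1}=-1$, $(-1)^k\beta_k>0$ for $k\ge0$, $|\beta_{k+1}|<|\beta_k|\to0$ (from Fact \ref{fact:beta}), the recurrence $\beta_{k+1}=a_{k+1}\beta_k+\beta_{k-1}$, and its consequences the magnitude recurrence $|\beta_{k-1}|=a_{k+1}|\beta_k|+|\beta_{k+1}|$ and the telescoping identities
\[
\sum_{i=0}^{\infty}a_{m+2i+1}\,\beta_{m+2i}=\sum_{i=0}^{\infty}\bigl(\beta_{m+2i+1}-\beta_{m+2i-1}\bigr)=-\beta_{m-1},\qquad
\sum_{j\ge m}a_{j+1}|\beta_j|=|\beta_{m-1}|+|\beta_m| .
\]

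For $m\ge0$ let $J_m$ be the half-open interval with endpoints $-\beta_m$ and $-\beta_{m-1}$, closed at the negative one and open at the positive one --- with $J_0$ read instead as $[-\beta_0,1-\beta_0)$, the exception being due to the constraint $b_1<a_1$ --- and let $J_m'\subseteq J_m$ be obtained by deleting the adjacent subinterval of length $|\beta_m|$ at the ``large-digit'' end (with $J_0':=J_0$). Using the recurrences above, a length count ($|J_{m+1}|+(a_{m+1}-1)|J_{m+1}'|=|\beta_{m+1}|+a_{m+1}|\beta_m|=|\beta_{m-1}|=|J_m'|$, and similarly for $J_m$) together with an inspection of endpoints yields the tiling identities
\[
J_m=J_{m+1}\ \sqcup\ \bigsqcup_{b=1}^{a_{m+1}}\bigl(b\beta_m+J_{m+1}'\bigr),\qquad
J_m'=J_{m+1}\ \sqcup\ \bigsqcup_{b=1}^{a_{m+1}-1}\bigl(b\beta_m+J_{m+1}'\bigr),
\]
the unions being of pairwise disjoint intervals exhausting the left-hand side (the sign of $\beta_m$, hence the parity of $m$, reverses the left-to-right order of the pieces). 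Conversely, the displayed telescoping sums and the sign pattern show that the value $\sum_{j\ge m}b_{j+1}\beta_j$ of any admissible tail (i.e.\ satisfying the conditions of the statement) lies in $J_m$, and in $J_m'$ when its leading digit $b_{m+1}$ is not maximal; here the condition ``$b_k<a_k$ for infinitely many odd $k$'' is precisely what forces the relevant upper bound to be strict, which is why $J_m$ is taken half-open.

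Existence is then the obvious greedy recursion: given $c\in J_0$, set $c_0:=c$; having chosen $b_1,\dots,b_m$ with $c_m:=c-\sum_{k=0}^{m-1}b_{k+1}\beta_k$ lying in $J_m$, and in $J_m'$ whenever $b_m\ge1$, the appropriate tiling identity puts $c_m$ in a unique piece, whose label we take for $b_{m+1}$, passing to $c_{m+1}:=c_m-b_{m+1}\beta_m$. This respects $0\le b_{m+1}\le a_{m+1}$, forbids $b_{m+1}=a_{m+1}$ exactly when $b_m\ge1$, preserves the invariant, and gives $|c_m|\le\max(|\beta_m|,|\beta_{m-1}|)=|\beta_{m-1}|\to0$, so $c=\sum_{k=0}^{\infty}b_{k+1}\beta_k$; moreover $b_k<a_k$ for infinitely many odd $k$, for otherwise the telescoping identity would force $c_m=-\beta_{m-1}\notin J_m$ for a suitable large $m$. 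For uniqueness, if two admissible digit strings represent the same $c$, take the least $m$ with $b_{m+1}\ne b_{m+1}'$; then the level-$m$ tails agree, yet by the previous paragraph each lies in the tile of $J_m$ (or $J_m'$) carrying its own leading digit, so disjointness of the tiles forces $b_{m+1}=b_{m+1}'$, a contradiction.

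The main obstacle I foresee is the verification of the tiling identities with the attendant endpoint bookkeeping: keeping track of the coupling condition ``$b_k=a_k\Rightarrow b_{k-1}=0$'' (which is exactly what forces the refinement $J_m'\subsetneq J_m$), of the parity-dependent reversal in the order of the pieces, and of the special behaviour at $m=0$ caused by $b_1<a_1$.
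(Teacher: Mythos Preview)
The paper does not prove this statement at all: it is recorded as a \emph{Fact} with a citation to Rockett and Sz\"usz \cite[Chapter II.6, Theorem 1]{RS}, and no argument is given in the paper itself. Your sketch is correct and is essentially the standard proof one finds in that reference: the greedy digit-extraction together with the interval tiling
\[
J_m=J_{m+1}\ \sqcup\ \bigsqcup_{b=1}^{a_{m+1}}\bigl(b\beta_m+J_{m+1}'\bigr),\qquad
J_m'=J_{m+1}\ \sqcup\ \bigsqcup_{b=1}^{a_{m+1}-1}\bigl(b\beta_m+J_{m+1}'\bigr),
\]
driven by the recurrence $\beta_{k+1}=a_{k+1}\beta_k+\beta_{k-1}$ and the alternating signs of the $\beta_k$. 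The endpoint bookkeeping you flag (half-open intervals, the coupling $b_{k+1}=a_{k+1}\Rightarrow b_k=0$ encoded by $J_m'$, the special case $m=0$ from $b_1<a_1$, and the ``infinitely many odd $k$'' clause excluding the open endpoint) is handled correctly. So there is nothing to compare: you have supplied a proof where the paper merely cites one.
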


\subsection*{Square roots of rational numbers} So far, we have only introduced facts about continued fractions that were already used in \cite{H-Twosubgroups}. In order to extend the results from that paper, we will now recall some theorems about continued fractions for square roots of rational numbers.  For the following, fix $d\in \Q_{>0}$ such that $d\neq c^2$ for all $c\in \Q$. When we refer to $p_k,q_k,\beta_k$ and $\zeta_k$, we mean the ones given by the continued fraction expansion of $\sqrt{d}$. In \cite{H-Twosubgroups} we used the fact that the continued fraction expansion of quadratic numbers is periodic. Here we need the following stronger statement for $\sqrt{d}$.

\begin{fact}\label{fact:cfsqrt}\cite[Theorem III.1.5]{RS} The continued fraction expansion of $\sqrt{d}$ is of the form
$[a_0;\overline{a_1,a_2,a_3\dots,a_2,a_1,2a_0}]$, 
where the periodic part without the last term is a palindrome. 
\end{fact}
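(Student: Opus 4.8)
The plan is to derive the claim from Galois's theorem on purely periodic continued fractions together with the classical reversal identity for their periods. The stated form forces $a_0 := \flr{\sqrt{d}}$ to be positive, so throughout we assume $d > 1$; the range $0 < d \le 1$ (where $a_0 = 0$) is genuinely outside the scope of the assertion.

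The first step is to pass from $\sqrt{d}$ to the reduced number $\alpha := a_0 + \sqrt{d}$. Its conjugate over $\Q$ is $\overline{\alpha} = a_0 - \sqrt{d}$, and since $a_0 < \sqrt{d} < a_0 + 1$ we have $\alpha > 1$ and $-1 < \overline{\alpha} < 0$; thus $\alpha$ is reduced. Being a quadratic irrational, $\alpha$ has an eventually periodic continued fraction expansion by Lagrange's theorem, and being reduced it is in fact purely periodic by Galois's theorem, say $\alpha = [\overline{c_0; c_1, \dots, c_{n-1}}]$ with $c_0 = \flr{\alpha} = 2a_0$. Subtracting $a_0$ yields
\[
\sqrt{d} = [a_0;\, c_1, \dots, c_{n-1}, \overline{2a_0, c_1, \dots, c_{n-1}}] = [a_0; \overline{c_1, \dots, c_{n-1}, 2a_0}].
\]
Setting $a_i := c_i$ for $1 \le i \le n-1$ and $a_n := 2a_0$, this exhibits the expansion in the form $[a_0; \overline{a_1, \dots, a_{n-1}, 2a_0}]$; it remains to show that the block $a_1, \dots, a_{n-1}$ is a palindrome.

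For the palindrome I would invoke the reversal identity: for a reduced quadratic irrational $\beta = [\overline{c_0; c_1, \dots, c_{n-1}}]$ one has $-1/\overline{\beta} = [\overline{c_{n-1}; c_{n-2}, \dots, c_1, c_0}]$. Applied to $\beta = \alpha$, and using $\sqrt{d} - a_0 = 1/\zeta_1$, this gives
\[
\zeta_1 = \frac{1}{\sqrt{d} - a_0} = -\frac{1}{\overline{\alpha}} = [\overline{a_{n-1}; a_{n-2}, \dots, a_1, 2a_0}].
\]
On the other hand, directly from $\sqrt{d} = [a_0; \overline{a_1, \dots, a_{n-1}, 2a_0}]$ we have $\zeta_1 = [a_1; a_2, \dots] = [\overline{a_1; a_2, \dots, a_{n-1}, 2a_0}]$. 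So $\zeta_1$ admits two continued fraction expansions displayed with the common period length $n$; by uniqueness of the continued fraction expansion the two length-$n$ blocks coincide, i.e. $a_{n-i} = a_i$ for $1 \le i \le n-1$ (and the last entry is $2a_0$ in both). This is exactly the asserted shape $[a_0; \overline{a_1, a_2, \dots, a_2, a_1, 2a_0}]$.

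The substance of the argument lies entirely in the two imported ingredients — Galois's characterization of pure periodicity (which also pins down the value $2a_0$ of the last partial quotient of the period) and the reversal identity $-1/\overline{\beta} = [\overline{c_{n-1}; \dots, c_0}]$; granting these, the deduction above is essentially bookkeeping. The one point needing a little care is the legitimacy of the term-by-term comparison, which rests on the uniqueness of continued fraction expansions of irrational numbers and on both expansions of $\zeta_1$ being written with the same period length $n$.
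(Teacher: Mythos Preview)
The paper does not prove this statement; it is quoted as a fact with a citation to Rockett and Sz\H{u}sz \cite[Theorem III.1.5]{RS}. Your argument is correct and is in fact the classical textbook proof (and essentially the one in that reference): pass to the reduced number $\alpha=a_0+\sqrt{d}$, invoke Galois's theorem to get pure periodicity with first partial quotient $2a_0$, and then use the reversal identity $-1/\overline{\alpha}=[\overline{c_{n-1};\dots,c_0}]$ together with the observation $-1/\overline{\alpha}=\zeta_1$ to force the palindrome. Your remark on the assumption $d>1$ is harmless in the paper's setting, since the paper explicitly reduces to $1.5<\sqrt{d}<2$ before using this fact.
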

\noindent Let $m$ be the length of the (minimal) period of the continued fraction expansion of $\sqrt{d}$. 

\begin{fact}\label{eq:zeta} Let $\ell \in \N$. Then 
\[
\zeta_{\ell m + 1} = \frac{1}{\sqrt{d}-a_0}.
\]
\end{fact}
\begin{proof}
By the periodicity and the definition of the $k$-th complete quotient, we obtain
\begin{equation*}
 \zeta_{\ell m+1} = \zeta_1 = \frac{1}{\zeta_0 - a_0} = \frac{1}{\sqrt{d} - a_0}.
\end{equation*}
\end{proof}

\noindent Our proof of Theorem D depends crucially on the following connection between the two sequences $(p_k)_{k\in \N}$ and $(q_k)_{k\in \N}$.

\begin{fact}\label{fact:pq} Let $k\in \N$. Then
\begin{align*}
p_{km} &= a_0 p_{km-1} + d q_{km-1},\\
q_{km} &= a_0 q_{km-1} + p_{km-1}.
\end{align*}
\end{fact}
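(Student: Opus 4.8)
The plan is to use the standard expression of $\sqrt{d}$ in terms of a single complete quotient together with two consecutive convergents, feed in the periodicity identity \eqref{eq:zeta}, and then separate rational and irrational parts.

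First I would record the identity
\[
\sqrt{d} \;=\; \frac{p_n\,\zeta_{n+1} + p_{n-1}}{q_n\,\zeta_{n+1} + q_{n-1}} \qquad (n \geq 0),
\]
which follows by a one-line induction on $n$ from the recurrences of Fact \ref{fact:recursive} and the defining relation $\zeta_n = a_n + \zeta_{n+1}^{-1}$ (it is also recorded in \cite{RS}). Specializing to $n = km$ and invoking \eqref{eq:zeta}, so that $\zeta_{km+1} = \sqrt{d} + a_0$, gives
\[
\sqrt{d}\,\bigl(q_{km}(\sqrt{d}+a_0) + q_{km-1}\bigr) \;=\; p_{km}(\sqrt{d}+a_0) + p_{km-1}.
\]

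Now I would expand: the left-hand side is $d\,q_{km} + (a_0 q_{km} + q_{km-1})\sqrt{d}$ and the right-hand side is $(a_0 p_{km} + p_{km-1}) + p_{km}\sqrt{d}$. Since $d$ is not a square in $\Q$, the number $\sqrt{d}$ is irrational, so $1$ and $\sqrt{d}$ are $\Q$-linearly independent; as $d$, $a_0$ and all the $p_i, q_i$ lie in $\Q$ (indeed in $\Z$), I may match the rational parts and the coefficients of $\sqrt{d}$ separately, which yields precisely
\[
d\,q_{km} = a_0 p_{km} + p_{km-1}, \qquad p_{km} = a_0 q_{km} + q_{km-1},
\]
which is the assertion. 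One can avoid the convergent identity altogether and argue instead from $\beta_{km-1} = -\zeta_{km+1}\beta_{km}$, which is a reindexing of Fact \ref{fact:beta}, together with $\beta_k = q_k\sqrt{d} - p_k$; the coefficient comparison is identical.

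There is no serious obstacle here: the arithmetic content is entirely in \eqref{eq:zeta} — that is, in the fact coming from the palindromic shape of the period in Fact \ref{fact:cfsqrt} that the complete quotient reached after a full period equals $\sqrt{d} + a_0$ — and the rest is the elementary split $\Q(\sqrt{d}) = \Q \oplus \Q\sqrt{d}$. The one step that needs care, and the nearest thing to a difficulty, is the index bookkeeping: one must make sure it is exactly $\zeta_{km+1}$, paired with the convergents $p_{km}/q_{km}$ and $p_{km-1}/q_{km-1}$, that appears in the convergent identity, so that \eqref{eq:zeta} can be substituted verbatim rather than some neighbouring complete quotient.
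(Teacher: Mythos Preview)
Your proof is correct and is essentially identical to the paper's: the paper also invokes the convergent identity $\sqrt{d} = (p_{km}\zeta_{km+1} + p_{km-1})/(q_{km}\zeta_{km+1} + q_{km-1})$, substitutes $\zeta_{km+1} = \sqrt{d} + a_0$ from \eqref{eq:zeta}, and then separates rational and irrational parts using the irrationality of $\sqrt{d}$.
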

\begin{proof} By Fact \ref{fact:zetaexp} and Fact \ref{eq:zeta},
\begin{align*}
\sqrt{d} &= \frac{p_{km}\zeta_{km+1} + p_{km-1}}{q_{km}\zeta_{km+1} + q_{km-1}}\\
&= \frac{p_{km}+ \sqrt{d} p_{km-1} - a_0 p_{km-1}}{q_{km} + \sqrt{d} q_{km-1}- a_0q_{km-1}}.
\end{align*}
Hence
\[
\sqrt{d} (q_{km} -a_0 q_{km-1} - p_{km-1}) + d q_{km-1} - p_{km}+ a_0 p_{km-1} =0.
\]
The statement follows from the irrationality of $\sqrt{d}$.
\end{proof}

\begin{fact}\label{fact:pkm}
There exist $s=(s_0,\dots,s_{m-1}), t=(t_0,\dots,t_{m-1})\in (\Q^2)^m$ such that for every $i\in \{0,\dots, m-1\}$ and for every $k\in \N$
\begin{align*}
p_{km} &= s_{i,1} \cdot p_{km+i} + s_{i,2}\cdot p_{km+i-1}\\
p_{km-1} &=  t_{i,1} \cdot p_{km+i} + t_{i,2} \cdot p_{km+i-1}.
\end{align*}
\end{fact}
\begin{proof}
We prove the statement by induction on $i$. When $i=0$, then the statement holds with $s_0=(1,0)$ and $t_0=(0,1)$. Now assume there 
exist $(s_0,\dots,s_i)$ and $(t_0,\dots,t_i)$ in $(\Q^2)^i$ such that for every $k\in \N$
\begin{align*}
p_{km} &= s_{i,1} \cdot p_{km+i} + s_{i,2}\cdot p_{km+i-1}\\
p_{km-1} &=  t_{i,1} \cdot p_{km+i} + t_{i,2} \cdot p_{km+i-1}.
\end{align*}
By the periodicity of the continued fraction expansion of $\sqrt{d}$ and Fact \ref{fact:recursive}, there exists $u \in \N_{> 0}$ such that for every $k \in \N$
\[
p_{km+i+1} = u \cdot p_{km+i} + p_{km+i-1}.
\]
Thus
\begin{align*}
p_{km} &= s_{i,2} p_{km+i+1} + (s_{i,1}-s_{i,2}u) \cdot p_{km+i}\\
p_{km-1} &= t_{i,2} p_{km+i+1} + (t_{i,1}-t_{i,2}u) \cdot p_{km+i}.
\end{align*}
\end{proof}

\begin{cor}\label{cor:pq} There exist $v=(v_0,\dots,v_{m-1}), w=(w_0,\dots,w_{m-1})\in \Q^m$ such that for every $i\in \{0,\dots, m-1\}$ and for every $k\in \N$
\[
q_{km+i} = v_{i} \cdot p_{km+i+1} + w_i \cdot p_{km+i}.
\]
\end{cor}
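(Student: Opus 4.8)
The plan is to derive the linear relations backwards from Fact~\ref{fact:pq} by expressing the $q$'s in terms of the $p$'s at indices congruent to $0$ and $-1$ modulo $m$, and then propagate these relations through one full period using the recurrence of Fact~\ref{fact:recursive}. Concretely, fix $k\in \N$. Fact~\ref{fact:pq} gives the two equations
\[
p_{km} = a_0 q_{km} + q_{km-1}, \qquad d\, q_{km} = a_0 p_{km} + p_{km-1},
\]
and I would like one more relation at the top end of the period, namely at index $km-1 = (k-1)m + (m-1)$. Writing the analogous pair of Fact~\ref{fact:pq} equations at level $k-1$ (so with $km$ replaced by $(k-1)m$) and combining them, I can solve the resulting $2\times 2$ linear system to write both $q_{km-1}$ and $q_{km}$ as rational combinations of $p_{km}$ and $p_{km-1}$: from the second displayed equation, $q_{km} = \tfrac{1}{d}(a_0 p_{km} + p_{km-1})$, and substituting into the first, $q_{km-1} = p_{km} - a_0 q_{km} = (1 - \tfrac{a_0^2}{d})p_{km} - \tfrac{a_0}{d}p_{km-1}$. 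Both coefficients are rational since $d, a_0\in\Q$. This handles the residues $i\equiv 0$ and (after reindexing $k\mapsto k+1$) $i = m-1$.

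Next I would fill in the intermediate residues $i\in\{1,\dots,m-2\}$. Here the idea is a downward induction on $i$ within a fixed period, using Fact~\ref{fact:recursive}: for any index $j$, $q_{j-1} = q_{j+1} - a_{j+1} q_j$ and likewise $p_{j-1} = p_{j+1} - a_{j+1} p_j$. Starting from the known expressions for $q_{km}$ and $q_{km-1}$ in terms of $p_{km}, p_{km-1}$, and knowing $p_{km-1} = p_{km+1} - a_{km+1} p_{km}$, I can rewrite everything in terms of $p_{km+1}$ and $p_{km}$, which is exactly the claimed form at $i=0$ (with $v_0 p_{km+1} + w_0 p_{km}$). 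Then I descend: assuming $q_{km+i} = v_i p_{km+i+1} + w_i p_{km+i}$ and $q_{km+i-1} = v_{i-1}' p_{km+i+1} + w_{i-1}' p_{km+i}$ for suitable rationals — that is, carrying along the representation of \emph{two consecutive} $q$'s in a fixed $p$-basis — one step of the recurrence lets me re-express $q_{km+i-1}$ and $q_{km+i}$ in the basis $\{p_{km+i}, p_{km+i-1}\}$, again with rational coefficients because all the $a_j$ are integers. Iterating from $i=m-1$ (where the relation at the top is supplied by the previous paragraph applied with $k$ replaced by $k+1$) down to $i=0$, or rather organizing the induction so that each residue $i$ gets its own pair $(v_i,w_i)$, produces the vectors $v,w\in\Q^m$. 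Crucially, the coefficients $a_j$ for $j$ in one period are the same for every $k$ by the periodicity in Fact~\ref{fact:cfsqrt}, so the vectors $v,w$ do not depend on $k$ — this is where periodicity is used.

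The main obstacle is bookkeeping rather than any deep difficulty: I must make sure the two-dimensional "$q$ in terms of consecutive $p$'s" invariant is set up so that a single application of the Fibonacci-type recurrence closes it, and I must check that changing the $p$-basis from $\{p_{j+1},p_j\}$ to $\{p_j,p_{j-1}\}$ really is invertible with rational entries — it is, since $p_{j+1} = a_{j+1}p_j + p_{j-1}$ with $a_{j+1}\in\Z$, so the change of basis matrix $\begin{pmatrix} a_{j+1} & 1 \\ 1 & 0\end{pmatrix}$ has determinant $-1$. One should also double-check the edge behavior at the ends of the period (the residues $i=0$ and $i=m-1$, where Fact~\ref{fact:pq} is what seeds the induction) and confirm the indices in Fact~\ref{fact:pq} line up: the relation at residue $m-1$ for parameter $k$ is the relation at residue $0$ for parameter $k$, shifted down by one via the recurrence, and its top-end analogue comes from parameter $k$ again — no off-by-one in the period length. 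Once the invariant is correctly stated, each inductive step is a one-line linear-algebra computation.
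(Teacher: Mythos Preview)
Your proposal is correct and follows essentially the same approach as the paper: both use Fact~\ref{fact:pq} to express $q_{km}$ and $q_{km-1}$ as rational combinations of $p_{km}$ and $p_{km-1}$, and then propagate through the period via the recurrence of Fact~\ref{fact:recursive} together with periodicity. The paper organizes the computation as three separate linear substitutions (forward $q$-recurrence to reach $q_{km+i}$, the conversion from Fact~\ref{fact:pq}, then backward $p$-recurrence to reach the basis $\{p_{km+i+1},p_{km+i}\}$) composed at the end, whereas you interleave them into a single induction carrying the two-consecutive-$q$'s-in-a-$p$-basis invariant; this is a cosmetic difference, though your exposition would benefit from fixing one direction (ascending in $i$ from the seed at $i=0$ is the cleanest) rather than mixing ascending and descending language.
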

\begin{proof} 
 By the periodicity of the continued fraction expansion of $\sqrt{d}$ and Fact \ref{fact:recursive}, we have that for $i\in \{0,\dots, m-1\}$ there exist $r_{i,1},r_{i,2}  \in \Z$ such that for every $k \in \N$
\[
q_{km+i} = r_{i,1} \cdot q_{km} + r_{i,2} \cdot q_{km-1}.
\]
By Fact \ref{fact:pq}, 
\begin{align*}
q_{km-1} &= \frac{1}{d} p_{km} - \frac{a_0}{d} p_{km-1},\\
q_{km} &= \frac{a_0}{d} p_{km} + \frac{d-a_0}{d} p_{km-1}.
\end{align*}
Thus for $i\in \{0,\dots, m-1\}$ there exist $u_{i,1},u_{i,2}  \in \Z$ such that for every $k \in \N$
\[
q_{km+i} = u_{i,1} \cdot p_{km} + u_{i,2} \cdot p_{km-1}.
\]
The statement now follows from Fact \ref{fact:pkm}.
\end{proof}
\noindent For purely periodic continued fraction expansions, like the one of the golden ratio $\varphi$, there is an even stronger connection between the $p_k$'s and the $q_k$'s. In that case, $q_{k+1}=p_k$. This fact was used in \cite{H-Twosubgroups} to show the definability of $\lambda_{\varphi}$ in $\Cal R_{\varphi}$. In the next section, we will prove that the weaker statement of Corollary \ref{cor:pq} is enough to establish the definability of $\lambda_{\sqrt{d}}$.

\begin{fact}\label{fact:productzeta} The following equation holds:
\[
\zeta_1 \cdots \zeta_{m+1} = \frac{q_m}{\sqrt{d}-a_0} + q_{m-1}
\]
As a consequence $\zeta_1  \cdots \zeta_{m+1} \in \Q(\sqrt{d})$.
\end{fact}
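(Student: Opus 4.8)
The plan is to establish the following identity for the complete quotients of any continued fraction expansion $[a_0;a_1,\dots]$: for every $n\ge 1$,
\[
\zeta_1\zeta_2\cdots\zeta_{n+1}=q_n\,\zeta_{n+1}+q_{n-1},
\]
and then to specialise it to $n=m$ and evaluate $\zeta_{m+1}$ by means of \eqref{eq:zeta}. Up to that last substitution everything is purely formal; the arithmetic of $\sqrt d$ enters only through \eqref{eq:zeta}.

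I would prove the displayed identity by induction on $n$. For $n=1$, using $\zeta_1=a_1+\zeta_2^{-1}$ (immediate from $\zeta_k=[a_k;a_{k+1},\dots]$) we get $\zeta_1\zeta_2=a_1\zeta_2+1$, which equals $q_1\zeta_2+q_0$ since $q_1=a_1$ and $q_0=1$ by Fact \ref{fact:recursive}. For the inductive step, assume $\zeta_1\cdots\zeta_n=q_{n-1}\zeta_n+q_{n-2}$. Multiplying by $\zeta_{n+1}$ and substituting $\zeta_n=a_n+\zeta_{n+1}^{-1}$ gives
\[
\zeta_1\cdots\zeta_{n+1}=(q_{n-1}\zeta_n+q_{n-2})\zeta_{n+1}=(q_{n-1}a_n+q_{n-2})\zeta_{n+1}+q_{n-1},
\]
and by the recurrence of Fact \ref{fact:recursive} we have $q_{n-1}a_n+q_{n-2}=q_n$, so the right-hand side is $q_n\zeta_{n+1}+q_{n-1}$. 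This closes the induction.

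Finally, take $n=m$ (recall $m\ge 1$), so that $\zeta_1\cdots\zeta_{m+1}=q_m\zeta_{m+1}+q_{m-1}$. Since $m+1=1\cdot m+1$, equation \eqref{eq:zeta} gives $\zeta_{m+1}=\zeta_1=\sqrt d+a_0$, and substituting produces
\[
\zeta_1\cdots\zeta_{m+1}=q_m(\sqrt d+a_0)+q_{m-1}=q_m\sqrt d+q_{m-1}+a_0q_m,
\]
which is the asserted equality; the ``in particular'' clause is then immediate, since $q_m,q_{m-1},a_0\in\Z$. I do not expect a genuine obstacle here: the whole of the argument is the elementary telescoping identity of the second paragraph, and the only input specific to square roots is the value $\zeta_{m+1}=\sqrt d+a_0$ furnished by the periodicity of the continued fraction of $\sqrt d$. (One can also reach the conclusion by telescoping Fact \ref{fact:beta} into $\zeta_1\cdots\zeta_{m+1}=(-1)^m/\beta_m=(-1)^m/(q_m\sqrt d-p_m)$ and rationalising, invoking $p_m^2-dq_m^2=(-1)^{m-1}$, which in turn comes from Fact \ref{fact:pq} and the determinant identity $p_mq_{m-1}-p_{m-1}q_m=(-1)^{m-1}$; but that route requires separately handling the first few complete quotients and is less economical than the induction above.)
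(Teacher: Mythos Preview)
Your proof is correct and follows essentially the same route as the paper: the paper obtains $\zeta_1\cdots\zeta_{m+1}=q_m\zeta_{m+1}+q_{m-1}$ by citing \cite[Lemma III.2.2]{RS} and then substitutes $\zeta_{m+1}=\sqrt{d}+a_0$ from \eqref{eq:zeta}, while you supply the short inductive proof of that identity yourself.
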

\begin{proof} Recall that $q_{-1}=0$ and $q_0=1$. Applying Fact \ref{fact:recursive} and Fact \ref{fact:zetaplus1} multiple times, we obtain
\begin{align*}
\zeta_1 \cdots \zeta_{m+1} &= (q_0 \zeta_1 + q_{-1}) \cdot \zeta_2 \cdots \zeta_{m+1}\\
&=\Big( (q_0 (a_1 + \frac{1}{\zeta_2}) + q_{-1}) \zeta_2\Big)
\cdot \zeta_3  \cdots  \zeta_{m+1}\\
&=\big( q_1 \zeta_2 + q_0\big)
\cdot \zeta_3  \cdots \zeta_{m+1}\\
&= \cdots = \big(q_{m-1} \zeta_m + q_{m-2}\big) \cdot \zeta_{m+1}= q_m \zeta_{m+1} + q_{m-1}
\end{align*}
The statements of the fact follows directly from Fact \ref{eq:zeta}.
\end{proof}

\noindent The definability of $\lambda_{\zeta_1 \cdots \zeta_{m+1}}$ in $\Cal S_{\sqrt{d}}$ is a direct consequence. Because of the periodicity of the continued fraction expansion of $\sqrt{d}$ and Fact \ref{fact:beta}, we also get the following fact.
\begin{fact}\label{eq:multbyc} Let $k\in \N$. Then
\[
\zeta_1 \cdots  \zeta_{m+1} \cdot \beta_{k+m} = (-1)^m \cdot \beta_{k}.
\]
\end{fact}
\noindent Hence multiplying a real number $z\in[-\frac{1}{\zeta_1}, 1-\frac{1}{\zeta_1})$ by $\zeta_1 \cdots \zeta_{m+1}$ corresponds to an $m$-shift in the Ostrowski representation of $z$.

\section{Defining scalar multiplication} Let $d \in \Q$. In this section we prove that $\Cal R_{\sqrt{d}}$ defines $\lambda_{\sqrt{d}}$. We can easily reduce to the case that $\sqrt{d}\notin \Q$. Since $\Cal R_{a}$ and $\Cal R_{qa}$ are interdefinable for non-zero $q\in \Q$ and the set of squares of rational numbers is dense in $\R_{\geq 0}$, we can assume that $1.5 < \sqrt{d} < 2$. By Fact \ref{fact:cfsqrt}, the continued fraction expansion of $\sqrt{d}$ is of the form
\[
[a_0;\overline{a_1,a_2,a_3\dots,a_2,a_1,2a_0}].
\]
Denote the length of the period by $m$ and set $s:= \max a_i$. From now on only the structure $\Cal R_{\sqrt{d}}$ is considered. Whenever we say definable, we mean definable in this structure.

\subsection*{Preliminaries} We now recall all the necessary results from Section 4 of \cite{H-Twosubgroups}. The main observation from the section we need is that the structure $(\R,<,+,\Z,\sqrt{d} \Z)$ defines predicates allowing us to definably recover the digits of the Ostrowski representation of a given number. Everything stated here is either explicitly stated in \cite{H-Twosubgroups} or can be obtained by minor modifications.\newline

\noindent Since $1<\sqrt{d}<2$, we have 
\[
[-\frac{1}{\zeta_1}, 1-\frac{1}{\zeta_1})=[1-\sqrt{d},2-\sqrt{d}).\]
We denote this interval by $I$. By the statement after \cite[Definiton 4.1]{H-Twosubgroups}, the set $\{q_k \sqrt{d} : k > 0\}$ is definable. We write $V$ for this set and $s_V$ for the successor function on $V$. The reader can easily verify that $s_V$ is definable since $V$ is definable as well.

\begin{defn} Let $f: \N\sqrt{d} \to \R$ map $n\sqrt{d}$ to $\sum_k b_{k+1} \beta_k$ if $\sum_k b_{k+1} q_k$ is the Ostrowski representation of $n$.
\end{defn}

\noindent By \cite[Lemma 4.3]{H-Twosubgroups} the function $f$ is definable. This allows us to move definably between natural numbers and real numbers whose Ostrowski representations have the same digits.

\begin{defn}
For $i \in \{0,\dots,s\}$ we define $E_i \subseteq V\times I$ such that $(q_{\ell}\sqrt{d},c) \in E_i$ if and only if there is a sequence $(b_i)_{i\in \N_{>0}}$ such that $\sum_{k=0}^{\infty} b_{k+1} \beta_k$ is the Ostrowski representation of $c$ and $b_{\ell+1}= i$.
\end{defn}

\noindent  Lemma 4.11 of \cite{H-Twosubgroups} only states that $E_i$ is definable for $i \in \{0,1\}$. However, the reader can check that its proof can be used easily to conclude that $E_i$ is indeed definable for every $i\in \{0,\dots s\}$.

\subsection*{Defining shifts} We now start to extend the results from \cite{H-Twosubgroups}. In order to define multiplication by $\sqrt{d}$, we have to show that certain generalized shifts in the Ostrowski representation are definable.

\begin{lem} Let $n \in \N_{>1}$ and $j\in\{0,\dots,n-1\}$. Then the set
\[
V_{j,n} := \{ q_l\sqrt{d} \in V \ : \ l = j \mod n \}
\]
is definable.
\end{lem}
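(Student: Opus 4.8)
The plan is to express membership in $V_{j,n}$ purely in terms of Ostrowski data, which is already definable. The key observation is that the sequence $(q_l\sqrt{d})_{l>0} = V$ is a definable discrete set on which the successor function $s_V$ is definable, so the index $l$ of an element $q_l\sqrt{d}$ can be tracked implicitly by iterating $s_V$ from the bottom element of $V$. Of course we cannot literally iterate $n$ times inside a first-order formula using a variable — but here $n$ is \emph{fixed}, so ``$s_V$ applied $n$ times'' and ``$l \equiv j \pmod n$'' are perfectly legitimate first-order conditions once we can name the first $n$ elements of $V$. More precisely, since $q_1\sqrt d$ is the least element of $V$, the elements $q_1\sqrt d, q_2\sqrt d, \dots$ are $s_V(q_1\sqrt d)$-iterates, and the set of $q_l \sqrt d$ with $l \equiv j \pmod n$ is the unique subset $W \subseteq V$ containing $q_{j}\sqrt d$ (a fixed element, namely $s_V^{\,j-1}$ of the minimum, or the minimum itself if we set up indices right) and closed under $s_V^{\,n}$ both forward and backward. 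This ``closure'' description is not first-order by itself, so instead I would give an explicit local criterion.

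The cleaner route: define $V_{j,n}$ by a congruence condition read off from the Ostrowski representation of a suitable real number. Fix the real number $c := \sum_{k=0}^{\infty} a_{2k+1}\beta_{2k}$ or, more robustly, build a definable $c \in I$ whose Ostrowski digit $b_{l+1}$ depends only on $l \bmod n$ in a way that distinguishes the residue $j$. Concretely, since the continued fraction expansion of $\sqrt d$ is periodic of period $m$, one can arrange — using Fact~\ref{ostrowskireal} to check admissibility — a real number $c_0$ whose digit string is eventually periodic with a period that is a common multiple of $n$ and $m$, chosen so that $b_{l+1} = 1$ exactly when $l \equiv j \pmod n$ (subject to the admissibility constraints, which only force the occasional digit down and can be accommodated by padding the period). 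Then by Fact~\ref{fact:EOstr}, $q_l\sqrt d \in V_{j,n}$ iff $(q_l\sqrt d, c_0) \in E_1$, and since $c_0$ is a fixed definable real and $E_1$ is definable, we are done. The existence of such a $c_0$ is guaranteed because the admissibility conditions in Fact~\ref{ostrowskireal} never force \emph{all} digits in a residue class to vanish as long as we leave enough slack (e.g. take the period length to be $2n$, put the marker digit in position $j$ of each block, and zeros elsewhere, which trivially satisfies $b_k=0$ whenever $b_{k+1}=a_{k+1}$ and keeps $b_k < a_k$ for infinitely many odd $k$).

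An alternative, perhaps more transparent, approach avoids choosing a clever real number and instead works directly with $V$ and $s_V$. Observe that $l \equiv j \pmod n$ is equivalent to: iterating $s_V$ exactly $n$ times is an automorphism of the fibre structure, and $q_l\sqrt d$ lies in the $s_V^{\,n}$-orbit of $q_{j}\sqrt d$ (or $q_{j+n}\sqrt d$, etc.). To make orbit membership first-order, I would use the auxiliary definable function $f$ (the Ostrowski value map) together with the recurrence $\beta_{k+1} = a_{k+1}\beta_k + \beta_{k-1}$: the pair $(\beta_l, \beta_{l+1})$, recoverable from $g(q_l\sqrt d)$, evolves under a fixed linear map as $l$ increases by $1$, and hence under the $n$-th power of that (period-dependent) linear cocycle as $l$ increases by $n$. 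Because $m \mid$ (something) one can pin down $l \bmod n$ from finitely many algebraic relations among $\beta_l, \beta_{l+1}$ and the fixed ratios $\beta_{km}/\beta_{(k+1)m}$ supplied by \eqref{eq:multbyc}; reducing modulo $\gcd$-type relations between $m$ and $n$ then isolates the residue $j$.

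The main obstacle I anticipate is the interaction between the modulus $n$ (which is arbitrary) and the continued-fraction period $m$ (which is fixed by $d$): a naive periodic digit pattern of period $n$ need not be Ostrowski-admissible because admissibility depends on the $a_k$'s, which cycle with period $m$, not $n$. The fix is to work with period $\operatorname{lcm}(m,n)$ (or $mn$) and to verify carefully, via Fact~\ref{ostrowskireal}, that the chosen digit pattern meets the constraints $0 \le b_k \le a_k$, $b_k = 0$ when $b_{k+1}=a_{k+1}$, and $b_k < a_k$ infinitely often for odd $k$; since we have freedom to set all but one digit per block to $0$, this verification is routine but must be stated. Once the admissible pattern is fixed, the definability of $E_1$ from the Preliminaries does all the remaining work, and the lemma follows immediately.
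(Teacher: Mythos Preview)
Your ``cleaner route'' is exactly the paper's argument: pick a real $c\in I$ whose Ostrowski digit $b_{l+1}$ equals $1$ precisely when $l\equiv j\pmod n$, and then read off $V_{j,n}$ as $\{q_l\sqrt d\in V:(q_l\sqrt d,c)\in E_1\}$ via Fact~\ref{fact:EOstr}. The one step you assert rather than prove is that this $c$ is \emph{definable without parameters}; this is the whole content of the lemma, and ``$c_0$ is a fixed definable real'' is not self-evident. The paper handles it by writing down a first-order formula that $c$ uniquely satisfies: $(q_j\sqrt d,c)\in E_1$, $(q_l\sqrt d,c)\in E_0$ for $l<j$, and for every $z\in V$ one has $(z,c)\in E_1$ iff $(s_V^{\,i}(z),c)\in E_0$ for $i=1,\dots,n-1$. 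Since each $q_l\sqrt d$ is itself definable and $E_0,E_1,s_V$ are definable, the unique solution $c$ is definable.

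Your admissibility worry is also unnecessary. The period $m$ of the continued fraction plays no role here: because $n>1$, the pattern with a single $1$ every $n$ places and $0$'s elsewhere already satisfies all the constraints of Fact~\ref{ostrowskireal} (consecutive digits are never both nonzero, so $b_{k+1}=a_{k+1}$ forces $b_k=0$; and $b_k=0<a_k$ for all $k$ off a single residue class mod $n$, hence for infinitely many odd $k$). No passage to $\operatorname{lcm}(m,n)$ or $2n$ is needed. Your alternative cocycle approach via $g$ and the recurrence for $\beta_k$ is not pursued in the paper and, as you yourself note, would require turning an orbit condition into a first-order one; the digit-pattern route avoids this entirely.
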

\begin{proof} Let $c\in I$ be the unique element of $I$ such that $(q_j\sqrt{d},c)\in E_1$, $(q_l\sqrt{d},c) \in E_0$ for $l<j$ and
\[
\forall q_l\sqrt{d} \in V \ ((q_l\sqrt{d},c) \in E_1) \leftrightarrow \big(\bigwedge_{i=1}^{n-1} (q_{l+i}\sqrt{d},c) \in E_0 \big).
\]
Since $n>1$, such a $c$ exists. Since every element of $V$ and the successor function $s_V$ are definable, so is $c$. It is easy to verify that
\[
V_{j,n} = \{q_l\sqrt{d} \in V \ : \ (q_l\sqrt{d},c) \in E_1\}.
\]
Hence $V_{j,n}$ is definable.
\end{proof}

\noindent Recall that $m$ is defined as the smallest period of the continued fraction of $\sqrt{d}$. Set $t := \max \{ m, 2\}$.

\begin{defn} For $i\in \{0,\dots, t-1\}$, define $B_i\subseteq \N\sqrt{d}$ to be the set of all $n\sqrt{d}$ such that
\[
\forall z \in V \ \big(z \notin V_{i,t} \rightarrow E_0(z,f(n\sqrt{d}))\big) \wedge \big(z \in V_{i,t} \rightarrow (E_0(z,f(n\sqrt{d}))\vee E_1(z,f(n\sqrt{d})))\big).
\]
\end{defn}

\noindent Note that $B_i$ is definable for each $i\in\{0,\dots,t-1\}$. The set $B_i$ contains precisely those of elements of $n\sqrt{d}$ of $\N\sqrt{d}$ for which the digit $b_{k+1}$ of the Ostrowski representation on $n$ is either $0$ or $1$ when $k=i \mod t$, and $0$ otherwise. Indeed, the following lemma follows the definitions of $f$ and $V_{i,t}$.

\begin{lem}\label{lem:propbj} Let $i,n \in \N$ be such that $\sum_{k} b_{k+1} q_k$ is the Ostrowski representation of $n$ and $n\sqrt{d} \in B_i$. Then for every $k\in \N$
\[
b_{k+1} \in \left\{
              \begin{array}{ll}
               \{0,1\}, & \hbox{if $k=i \mod t$;} \\
                \{0\}, & \hbox{otherwise.}
              \end{array}
            \right.
\]
\end{lem}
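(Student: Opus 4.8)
The statement to prove is Lemma~\ref{lem:propbj}, which characterizes the digits in the Ostrowski representation of those $n$ for which $n\sqrt{d}\in B_i$.

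The plan is to unwind the definition of $B_i$ using the previously established facts, with essentially no new work required. First I would fix $i$ and $n$ with $n\sqrt{d}\in B_i$, and let $\sum_k b_{k+1}q_k$ be the Ostrowski representation of $n$; then by the definition of $f$, the real number $f(n\sqrt{d})$ has Ostrowski representation $\sum_k b_{k+1}\beta_k$ (one should note $f(n\sqrt{d})\in I$, which is where the Ostrowski representation of a real from Fact~\ref{ostrowskireal} lives). Now fix any $k\in\N$ and apply the defining condition of $B_i$ to the element $z=q_k\sqrt{d}\in V$. The element $q_k\sqrt{d}$ lies in $V_{i,t}$ precisely when $k\equiv i \bmod t$, so the definition of $B_i$ splits into two cases.

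In the case $k\not\equiv i\bmod t$, the element $z=q_k\sqrt{d}$ is not in $V_{i,t}$, so the defining formula for $B_i$ forces $E_0(z,f(n\sqrt{d}))$, i.e. $(q_k\sqrt{d},f(n\sqrt{d}))\in E_0$. By Fact~\ref{fact:EOstr}, applied with $c=f(n\sqrt{d})$ and $l=k$, the $(k+1)$-st Ostrowski digit of $f(n\sqrt{d})$ equals $0$; since that digit is exactly $b_{k+1}$, we get $b_{k+1}=0$. In the case $k\equiv i\bmod t$, the element $z=q_k\sqrt{d}$ is in $V_{i,t}$, so the defining formula yields $E_0(z,f(n\sqrt{d}))\vee E_1(z,f(n\sqrt{d}))$, i.e. $(q_k\sqrt{d},f(n\sqrt{d}))\in E_0$ or $(q_k\sqrt{d},f(n\sqrt{d}))\in E_1$. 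By Fact~\ref{fact:EOstr} again, this means $b_{k+1}=0$ or $b_{k+1}=1$, i.e. $b_{k+1}\in\{0,1\}$. Since $k$ was arbitrary, this is exactly the asserted dichotomy.

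There is essentially no obstacle here: the lemma is a routine translation between the membership condition defining $B_i$ and the digit characterization of Fact~\ref{fact:EOstr}, mediated by the definition of $f$ and the definition of $V_{i,t}$. The only point that requires a moment's care is making sure the indices line up correctly — that membership of $z=q_l\sqrt{d}$ in $V_{i,t}$ corresponds to $l\equiv i\bmod t$ (immediate from the definition of $V_{j,n}$ with $n=t$, $j=i$), and that the digit extracted by $E_i$ at $q_l\sqrt{d}$ is $b_{l+1}$ rather than $b_l$ (this is the exact content of Fact~\ref{fact:EOstr}). Once these bookkeeping conventions are matched up, the proof is the single-line appeal to Fact~\ref{fact:EOstr} indicated in the text.
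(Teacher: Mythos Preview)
Your proposal is correct and is precisely the argument the paper has in mind: the paper does not give a separate proof but simply remarks that the lemma follows immediately from Fact~\ref{fact:EOstr} and the definition of $f$, and your write-up is exactly that unwinding. The only thing you add beyond the paper is the explicit bookkeeping on indices (that $q_k\sqrt{d}\in V_{i,t}$ iff $k\equiv i\bmod t$, and that $E_j$ at $q_l\sqrt{d}$ reads off $b_{l+1}$), which is harmless and helpful.
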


\noindent Since we chose $t$ to be at least $2$, we obtain the following corollary.

\begin{cor}\label{cor:finitesub} Let $X \subseteq \Cal P(\N)$ be finite. Then there exists $n\sqrt{d} \in B_i$ such that
\[
n = \sum_{k \in X} q_{kt+i}.
\]
\end{cor}
\noindent Hence there is a natural bijection between $B_i$ and the set of finite subsets of $\N$. This is the only place where we need that $t\geq 2$. We now use this observation to define a shift between $B_i$ and $B_j$ when $j=i+1 \mod t$.

\begin{defn} Let $i,j\in \{0,\dots,t-1\}$ be such that $j = i+1 \mod t$. Let $S_i : B_i \to B_j$ map $x \in B_i$ to the unique $y \in B_j$ such that
\[
E_0(1,y) \wedge \forall z\in V (E_{1}(z,x) \leftrightarrow E_{1}(s_V(z),y)).
\]
\end{defn}

\noindent It follows from Corollary \ref{cor:finitesub} that the unique $y \in B_j$ in the above Definition always exists. Since $B_i$ and $E_1$ are definable, so is $S_i$. Moreover, note that the function $S_i$ is simply a shift by one in the Ostrowski representation. The following lemma makes this statement precise.

\begin{lem}\label{lem:oshift} Let $i,j\in \{0,\dots,t-1\}$ be such that $j = i+1 \mod t$. Let $n\sqrt{d} \in B_i$ and $\ell\in \N$ such that $S_i(n\sqrt{d})=\ell\sqrt{d}$ and $\sum_k b_{k+1} q_k$ is the Ostrowski representation of $n$. Then the Ostrowski representation of $\ell$ is $\sum_k b_{k+1} q_{k+1}$.
\end{lem}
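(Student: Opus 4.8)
The plan is to verify the claim by unwinding the definition of $S_i$ through Fact~\ref{fact:EOstr} and the characterization of the coefficients of elements of $B_i$ given in Lemma~\ref{lem:propbj}. Since $n\sqrt{d}\in B_i$, the Ostrowski representation of $n$ has $b_{k+1}\in\{0,1\}$ when $k\equiv i \bmod t$ and $b_{k+1}=0$ otherwise; in particular $f(n\sqrt{d})=\sum_k b_{k+1}\beta_k$, and by Fact~\ref{fact:EOstr} we have $(q_k\sqrt{d},f(n\sqrt{d}))\in E_{b_{k+1}}$ for every $k$. Let $l\sqrt{d}=S_i(n\sqrt{d})\in B_j$, and write $\sum_k c_{k+1}q_k$ for the Ostrowski representation of $l$; again $c_{k+1}\in\{0,1\}$ when $k\equiv j\bmod t$ and $c_{k+1}=0$ otherwise, and $(q_k\sqrt{d},f(l\sqrt{d}))\in E_{c_{k+1}}$.

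Next I would translate the defining formula for $S_i$. The clause $\forall z\in V\ (E_1(z,x)\leftrightarrow E_1(s_V(z),y))$ says exactly that $b_{k+1}=1$ iff $c_{k+2}=1$, i.e. $c_{k+2}=b_{k+1}$ for all $k\geq 0$; equivalently $c_{r+1}=b_r$ for all $r\geq 1$. The clause $E_0(1,y)$, i.e. $(q_0\sqrt{d},f(l\sqrt{d}))\in E_0$, forces $c_1=0$. Since $q_0=1$ here (recall $q_0=1$ from Fact~\ref{fact:recursive}) and $q_0\sqrt{d}$ is the least element of $V$, this is the $k=0$ instance. Combining, the coefficient sequence $(c_{k+1})_k$ of $l$ is the sequence $(b_{k+1})_k$ of $n$ shifted up by one index: $c_1=0$ and $c_{k+2}=b_{k+1}$. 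This is precisely the assertion that the Ostrowski representation of $l$ is $\sum_k b_{k+1}q_{k+1}$.

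The one genuine point to check — and the step I expect to be the main obstacle — is that the expression $\sum_k b_{k+1}q_{k+1}$ really \emph{is} a valid Ostrowski representation, so that by uniqueness in Fact~\ref{ostrowski} it must coincide with the representation of $l$ produced by the formula. Here the hypothesis $n\sqrt{d}\in B_i$ is doing the work: because the only nonzero digits of $n$ sit at indices $\equiv i\bmod t$ and are all equal to $1$ (hence $<a_k$, using $a_k\geq 1$), the admissibility conditions "$b_k\leq a_k$, and $b_{k-1}=0$ whenever $b_k=a_k$" are trivially preserved after shifting, and no two nonzero digits are adjacent. Thus $\sum_k b_{k+1}q_{k+1}=\sum_k c_{k+1}q_k$ with $(c_{k+1})$ admissible, so it is the Ostrowski representation of the number it sums to, which is $l$ by the computation above. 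I would also note in passing that $l\sqrt{d}\in B_j$ guarantees $l$ is the unique such element, so $S_i$ is well-defined and the argument is not circular. This completes the proof.
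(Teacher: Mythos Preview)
Your proof is correct and is exactly the natural unwinding of the definitions; the paper in fact states this lemma without proof, offering only the post-lemma remark that $\sum_k b_{k+1}q_{k+1}$ is a valid Ostrowski representation because membership in $B_i$ forces all digits to lie in $\{0,1\}$ with no two consecutive nonzero digits. Your argument supplies the missing details along precisely those lines.

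One small slip: from $b_{k+1}\in\{0,1\}$ and $a_k\ge 1$ you only get $b_{k+1}\le a_k$, not $b_{k+1}<a_k$; the admissibility condition ``if $b_k=a_k$ then $b_{k-1}=0$'' still needs the separate observation (which you do make) that $t\ge 2$ forces adjacent digits never to be simultaneously nonzero. With that correction the verification of admissibility after the shift goes through.
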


\noindent It is worth pointing out that the sum $\sum_k b_{k+1} q_{k+1}$ is only the Ostrowski representation of $\ell$, because being in $B_i$ implies that all $b_{k}$ are in $\{0,1\}$, and that whenever $b_k=1$, then $b_{k-1}=0$. In general, when we take an Ostrowski representation and shift it as in Lemma \ref{lem:oshift}, there is no guarantee that the resulting sum is again an Ostrowski representation. However, in order to define multiplication by $\sqrt{d}$, we will have to make shifts that may result in sums that are not Ostrowski representations. Towards that goal, we will now introduce a new definable object $C$ which in a way made precise later, contains all Ostrowski representations and is closed under shifts.

\begin{defn} For $\ell\in \{0,\dots,t-1\}$, define
\[
C_{\ell} := \{ (x_1,\dots, x_s) \in B_{\ell}^s \ : \ \bigwedge_{1\leq i<j\leq s} \forall z \in V \big (E_1(z,f(x_j)) \rightarrow E_1(z,f(x_i))\big)\}.
\]
Set $C := C_0\times \dots \times C_{t-1}$. Define $T : V \times C \to \{0,\dots,s\}$ by
\[
(z,(c_0,\dots,c_{t-1})) \mapsto \max \{ j \ : \bigvee_{i=0}^{t-1}  E_1(z,f(c_{i,j})) \wedge z \in V_{i,t} \} \cup \{0\}.
\]
\end{defn}

\noindent In the following, we will often work with an element $c=(c_0,\dots,c_{t-1}) \in C$, where $c_i$ is assumed to be in $C_i$. When we refer to $c_{i,j}$, as is done in the definition of $T$, we will always mean the $j$-th component of $c_i$. Note that for every $z\in V$ there exists a unique $i\in \{0,\dots,t-1\}$ such that $z \in V_{i,t}$. Hence for that $i$, we immediately get from the definition of $T$ that for every $c\in C$
\[
T(z,c)= \max \{ j \ :  E_1(z,f(c_{i,j})) \wedge z \in V_{i,t} \}\cup \{0\}.
\]
Thus the conjunction in the definition of $T$ can be dropped if $i$ is assumed to satisfy $z \in V_{i,t}$.

\begin{lem}\label{lem:isoC} Let $\alpha : \N \to \{0,\dots,s\}$ be a function that is eventually zero. Then there is a unique $c \in C$ such that $T(q_{\ell}\sqrt{d},c) = \alpha(\ell)$ for all $\ell \in \N$.
\end{lem}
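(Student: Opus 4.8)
The plan is to encode the function $\alpha$ by splitting it according to residues modulo $t$ and then, within each residue class, using the bijection between $B_l$ and finite subsets of $\N$ (the remark after Lemma \ref{lem:propbj}) to record, for each value $j\in\{1,\dots,s\}$, the set of positions where $\alpha$ is at least $j$. Concretely, for each $l\in\{0,\dots,t-1\}$ and each $j\in\{1,\dots,s\}$ I would let $c_{l,j}\sqrt{d}$ be the unique element of $B_l$ whose Ostrowski representation has digit $1$ exactly at those positions $k\equiv l \bmod t$ with $\alpha(k)\geq j$, and digit $0$ elsewhere; this $n$ is well-defined because $\alpha$ is eventually zero, so only finitely many positions are involved, and the remark after Lemma \ref{lem:propbj} guarantees such an $n\sqrt{d}\in B_l$ exists. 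Then $c_l := (c_{l,1},\dots,c_{l,s})$ lies in $C_l$: the nesting condition $E_1(z,f(c_{l,j}))\rightarrow E_1(z,f(c_{l,i}))$ for $i<j$ is exactly the statement that $\{k : \alpha(k)\geq j\}\subseteq\{k:\alpha(k)\geq i\}$, combined with Fact \ref{fact:EOstr} applied to the Ostrowski representation of each $c_{l,j}$. Set $c := (c_0,\dots,c_{t-1})\in C$.

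Next I would verify that this $c$ satisfies $T(q_l\sqrt{d},c)=\alpha(l)$ for all $l$. Fix $l$ and let $i\in\{0,\dots,t-1\}$ be the unique residue with $q_l\sqrt{d}\in V_{i,t}$, i.e. $l\equiv i\bmod t$. By the simplified formula for $T$ noted just before the Lemma, $T(q_l\sqrt{d},c)=\max\{j : E_1(q_l\sqrt{d},f(c_{i,j}))\}\cup\{0\}$. By Fact \ref{fact:EOstr}, $E_1(q_l\sqrt{d},f(c_{i,j}))$ holds iff the $(l+1)$-st Ostrowski digit of $c_{i,j}$ equals $1$, which by construction happens iff $\alpha(l)\geq j$. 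Hence the maximum of such $j$ (together with $0$) is exactly $\alpha(l)$, using that $\alpha(l)\leq s$ since $\alpha$ takes values in $\{0,\dots,s\}$. This gives existence.

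For uniqueness, suppose $c,c'\in C$ both satisfy $T(q_l\sqrt{d},c)=T(q_l\sqrt{d},c')=\alpha(l)$ for all $l$. It suffices to show $c_{i,j}=c'_{i,j}$ for all $i\in\{0,\dots,t-1\}$ and $j\in\{1,\dots,s\}$. Since $c_{i,j}\sqrt{d}\in B_i$, its Ostrowski representation has all digits in $\{0,1\}$ by Lemma \ref{lem:propbj}, and an element of $B_i$ is determined by the set of positions carrying digit $1$; so it is enough to show these digit patterns agree. Fix $i$ and argue: for $l\equiv i\bmod t$, the $(l+1)$-st digit of $c_{i,j}$ is $1$ iff $E_1(q_l\sqrt{d},f(c_{i,j}))$ (Fact \ref{fact:EOstr}), and by the $C_i$-nesting condition the set $\{j : E_1(q_l\sqrt{d},f(c_{i,j}))\}$ is a down-set in $\{1,\dots,s\}$, hence equals $\{1,\dots,T(q_l\sqrt{d},c)\}=\{1,\dots,\alpha(l)\}$; so the $(l+1)$-st digit of $c_{i,j}$ is $1$ iff $\alpha(l)\geq j$, which pins it down, and the same holds for $c'_{i,j}$. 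For $l\not\equiv i\bmod t$ the $(l+1)$-st digits of both $c_{i,j}$ and $c'_{i,j}$ are $0$ by Lemma \ref{lem:propbj}. Thus $c_{i,j}$ and $c'_{i,j}$ have the same Ostrowski representation and are equal.

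The only slightly delicate point, and the step I would be most careful about, is the use of the $C_l$-condition to conclude that $\{j : E_1(q_l\sqrt{d},f(c_{i,j}))\}$ is a genuine initial segment of $\{1,\dots,s\}$ rather than an arbitrary set — this is what makes the $\max$ in the definition of $T$ recover $\alpha(l)$ exactly and not merely bound it, and it is also what drives the uniqueness argument. Everything else is a routine bookkeeping exercise translating between Ostrowski digits, the predicates $E_0,E_1$ via Fact \ref{fact:EOstr}, and the membership conditions defining $B_i$ and $C_i$.
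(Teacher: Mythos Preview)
Your proof is correct and follows the same approach as the paper: encode $\alpha$ via the bijection between $B_i$ and finite subsets of $\N$ from the remark after Lemma~\ref{lem:propbj}, then read off $T$ using Fact~\ref{fact:EOstr} and the nesting condition in the definition of $C_l$. The paper's own proof is a two-sentence sketch that leaves the details (in particular the uniqueness argument) to the reader, and you have simply spelled these out; one trivial notational quibble is that elements of $B_l$ already lie in $\N\sqrt{d}$, so you should write $c_{l,j}\in B_l$ rather than $c_{l,j}\sqrt{d}\in B_l$.
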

\begin{proof} By Corollary \ref{cor:finitesub} we can find for each $j\in \{0,\dots,t-1\}$ and for each finite $X \in \Cal P(\N)$ an element $n\sqrt{d}\in B_j$ such that $k\in X$ if and only if $E_1(q_{kt+j+1}\sqrt{d},f(n\sqrt{d}))$. The statement of the Lemma follows easily.\end{proof}

\noindent As a corollary of Lemma \ref{lem:isoC} we get that the set of Ostrowski representations can be embedded into $C$.

\begin{cor}\label{cor:T} Let $n\in \N$ and $\sum_k b_{k+1} q_k$ be the Ostrowski representation of $n$. Then there is a unique $c \in C$ such that $b_{k+1} = T(q_k\sqrt{d},c)$ for all $k\in \N$.
\end{cor}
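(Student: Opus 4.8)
\textbf{Proof proposal for Corollary \ref{cor:T}.}

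The plan is to deduce this directly from Lemma \ref{lem:isoC}, observing that an Ostrowski representation is a special case of the ``eventually zero functions'' handled there. First I would let $\sum_k b_{k+1}q_k$ be the Ostrowski representation of $n$, so by Fact \ref{ostrowski} we have $b_1 < a_1$ and $b_k \leq a_k$ for all $k$, hence $b_{k+1} \in \{0,\dots,s\}$ for every $k\in\N$ (recall $s = \max a_i$). Moreover, since the Ostrowski representation of a natural number is a finite sum, the sequence $(b_{k+1})_{k\in\N}$ is eventually zero. Therefore the function $\alpha : \N \to \{0,\dots,s\}$ defined by $\alpha(l) := b_{l+1}$ is an eventually zero function to which Lemma \ref{lem:isoC} applies.

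Applying Lemma \ref{lem:isoC} to this $\alpha$, I obtain a unique $c\in C$ with $T(q_l\sqrt{d},c) = \alpha(l) = b_{l+1}$ for all $l\in\N$, which is exactly the conclusion. The uniqueness clause of the Corollary is inherited verbatim from the uniqueness clause of Lemma \ref{lem:isoC}, since the displayed condition $b_{k+1} = T(q_k\sqrt{d},c)$ for all $k$ is precisely the condition $T(q_l\sqrt{d},c) = \alpha(l)$ for all $l$ determining $c$ in the Lemma.

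There is essentially no obstacle here; the only point requiring a sentence of justification is that the coefficients $b_{k+1}$ of an Ostrowski representation indeed lie in the range $\{0,\dots,s\}$ and vanish for large $k$, both of which are immediate from Fact \ref{ostrowski} together with the choice of $s$. The content of the Corollary is entirely in recording that the (abstract) object $C$ built in the previous definition is rich enough to encode every Ostrowski representation via the readout map $T$, so that later shifts performed inside $C$ will in particular act on genuine Ostrowski representations.
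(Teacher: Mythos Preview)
Your proof is correct and matches the paper's approach exactly: the paper simply states this as an immediate corollary of Lemma \ref{lem:isoC}, and your argument spells out precisely why, namely that the coefficient sequence $\alpha(l)=b_{l+1}$ is an eventually zero function into $\{0,\dots,s\}$ to which the Lemma applies.
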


\begin{defn} Let $R: \N\sqrt{d} \to C$ map $n\sqrt{d}$ to the unique $c \in C$ such that
\[
\bigwedge_{i=1}^s \ \forall z \in V \ E_i(z,f(n\sqrt{d})) \leftrightarrow T(z,c)=i.
\]
\end{defn}
\noindent By Corollary \ref{cor:T} the unique $c$ in the preceding definition indeed exists. Note that $R$ is definable. The motivation for the definition of $C$ was to be able to define shifts.

\begin{defn} Let $S : C \to C$ be given by
\[
(c_0,\dots,c_{t-1}) \mapsto (S_{t-1}(c_{t-1}),S_0(c_0),\dots,S_{t-2}(c_{t-2})).
\]
\end{defn}

\noindent For $\ell \geq 1$ we denote the $\ell$-th compositional iterate of $S$ by $S^{\ell}$. The following Lemma shows that the function $S$ is indeed a shift operation with respect to $T$.

\begin{lem}\label{lem:STconnection} Let $c \in C$ and $k \in \N$. Then $T(q_k\sqrt{d},c) = T(q_{k+1}\sqrt{d},S(c)).$
\end{lem}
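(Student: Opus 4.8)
The plan is to unwind all the definitions and reduce the claim to the single-index shift statement of Lemma \ref{lem:oshift}. Recall that $S$ is defined coordinatewise as $(c_0,\dots,c_{t-1}) \mapsto (S_{t-1}(c_{t-1}),S_0(c_0),\dots,S_{t-2}(c_{t-2}))$, so if we write $c' = S(c)$ then $c'_i = S_{i-1}(c_{i-1})$ for $i \in \{1,\dots,t-1\}$ and $c'_0 = S_{t-1}(c_{t-1})$; in all cases $c'_i = S_{i-1 \bmod t}(c_{i-1 \bmod t})$. Fix $k \in \N$ and let $i \in \{0,\dots,t-1\}$ be the unique index with $q_k\sqrt{d} \in V_{i,t}$, i.e.\ $k \equiv i \bmod t$. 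Then $k+1 \equiv i+1 \bmod t$, so $q_{k+1}\sqrt{d} \in V_{i',t}$ where $i' = i+1 \bmod t$. Using the simplified formula for $T$ noted right after the definition of $T$ (the conjunction can be dropped once the residue is fixed), we have $T(q_k\sqrt{d},c) = \max\{j : E_1(q_k\sqrt{d},f(c_{i,j}))\}\cup\{0\}$ and $T(q_{k+1}\sqrt{d},S(c)) = \max\{j : E_1(q_{k+1}\sqrt{d},f(c'_{i',j}))\}\cup\{0\}$, and by the coordinatewise description $c'_{i'} = S_i(c_i)$.

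So the task reduces to comparing, for each fixed $j \in \{1,\dots,s\}$, the truth values of $E_1(q_k\sqrt{d},f(c_{i,j}))$ and $E_1(q_{k+1}\sqrt{d},f(S_i(c_{i,j})))$. Here $c_{i,j} \in B_i$ by definition of $C_i$, and $S_i(c_{i,j}) \in B_{i'}$. The key point is that for an element $x = n\sqrt{d} \in B_i$, the predicate $E_1(q_l\sqrt{d}, f(x))$ is, by Fact \ref{fact:EOstr} and the definition of $f$, exactly the assertion that the $(l+1)$-st Ostrowski digit $b_{l+1}$ of $n$ equals $1$ (the only nonzero value it can take, by Lemma \ref{lem:propbj}). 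Now apply Lemma \ref{lem:oshift} with the pair $(i,i')$: if $\sum_k b_{k+1}q_k$ is the Ostrowski representation of $n$ and $S_i(n\sqrt{d}) = l'\sqrt{d}$, then the Ostrowski representation of $l'$ is $\sum_k b_{k+1}q_{k+1}$, i.e.\ the $(p+1)$-st digit of $l'$ equals the $p$-th digit of $n$ for all $p \geq 1$ (and the $1$st digit of $l'$ is $0$). In particular the $(k+2)$-nd digit of $l'$ equals the $(k+1)$-st digit of $n$. Translating back: $E_1(q_{k+1}\sqrt{d},f(S_i(c_{i,j})))$ holds iff the $(k+2)$-nd digit of the index of $S_i(c_{i,j})$ is $1$ iff the $(k+1)$-st digit of the index of $c_{i,j}$ is $1$ iff $E_1(q_k\sqrt{d},f(c_{i,j}))$ holds. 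Since this equivalence holds uniformly in $j$, the two maxima agree, giving $T(q_k\sqrt{d},c) = T(q_{k+1}\sqrt{d},S(c))$.

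One edge case deserves a sentence: when $i = t-1$, so $i' = 0$ and $c'_0 = S_{t-1}(c_{t-1})$, the same argument applies verbatim since Lemma \ref{lem:oshift} is stated for any $i,j$ with $j = i+1 \bmod t$, including $j = 0$. Also one should confirm that $T(q_{k+1}\sqrt{d},S(c))$ only "sees" the coordinate $c'_{i'}$, which is exactly the content of the simplified formula for $T$ recorded after its definition, so no interference from the other coordinates occurs.

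The main obstacle is bookkeeping rather than mathematics: keeping the index arithmetic straight ($k \leftrightarrow i$, $k+1 \leftrightarrow i+1 \bmod t$, and the digit reindexing $b_{k+1} \leftrightarrow b_{k+2}$ induced by $S_i$) and making sure the case $i = t-1$ is not secretly different. Everything substantive is already packaged in Fact \ref{fact:EOstr}, Lemma \ref{lem:propbj}, and Lemma \ref{lem:oshift}; the proof is essentially a diagram chase through the definitions of $S$, $S_i$, and $T$.
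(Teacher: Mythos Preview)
Your argument is correct and follows the same overall structure as the paper's proof: fix the residue $i$ of $k$ modulo $t$, use the simplified formula for $T$ to reduce to the equivalence $E_1(q_k\sqrt d, f(c_{i,j})) \leftrightarrow E_1(q_{k+1}\sqrt d, f(S_i(c_{i,j})))$ for each $j$, and conclude that the two maxima agree. The only difference is in how you establish this equivalence. The paper gets it in one line directly from the \emph{definition} of $S_i$: by definition $S_i(x)$ is the unique $y$ with $\forall z\in V\,(E_1(z,x)\leftrightarrow E_1(s_V(z),y))$, and $s_V(q_k\sqrt d)=q_{k+1}\sqrt d$. Your route---translating to Ostrowski digits via Fact~\ref{fact:EOstr}, invoking the digit-shift description in Lemma~\ref{lem:oshift}, and translating back---is valid but an unnecessary detour, since Lemma~\ref{lem:oshift} itself is just a restatement of that defining property of $S_i$ in terms of digits.
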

\begin{proof} Let $c=(c_0,\dots,c_{t-1})\in C$. Let $i,\ell \in \{0,\dots,t-1\}$ such that $k=i\mod t$ and $\ell=i+1\mod t$. By Definition of $S_i$, we have that for $j\in \{1,\dots,s\}$
\[
E_1(q_k\sqrt{d},c_{i,j}) \leftrightarrow E_1(q_{k+1}\sqrt{d},S_i(c_{i,j})).
\]
Since $q_k \sqrt{d} \in V_{i,t}$ and $q_{k+1} \sqrt{d} \in V_{\ell,t}$, it follows immediately from the definition of $T$ that
\begin{align*}
T(q_k\sqrt{d},c) &=  \max \{ j \ : \ E_1(q_k\sqrt{d},c_{i,j}) \} \cup \{0\}\\
& = \max \{ j \ : \ E_1(q_{k+1}\sqrt{d},S(c_{i,j})) \} \cup \{0\}= T(q_{k+1}\sqrt{d},S(c)).
\end{align*}
\end{proof}

\noindent After showing that $\N\sqrt{d}$ can be embedded into $C$ and that there exists a definable shift operation, the next step is to recover natural numbers and real numbers from $C$. To achieve this, we define the following two functions.

\begin{defn} For $u=(u_0,\dots,u_{t-1})\in \Q^t$, let $\Sigma_u :C \to \R$ be defined by
\[
(c_0,\dots,c_{t-1}) \mapsto \sum_{i=0}^{t-1} u_i \sum_{j=1}^s c_{i,j},
\]
and $F_u: C \to \R$ by
\[
(c_0,\dots,c_{t-1}) \mapsto \sum_{i=0}^{t-1} u_i \sum_{j=1}^s f(c_{i,j}).
\]
\end{defn}

\noindent As is made precise in the following Proposition, one should think of the image of $C$ under $\Sigma_u$ and $F_u$ as the set of numbers that can be expressed (not necessarily uniquely) in some generalized Ostrowski representation.






\begin{prop}\label{prop:gshift} Let $u=(u_0,\dots,u_{t-1}) \in \Q^t$ and $n\in \N$ be such that $\sum_k b_{k+1} q_k$ is the Ostrowski representation of $n$. Then
\begin{align*}
\Sigma_u(S^{\ell}(R(n\sqrt{d}))) &= \sum_{i=0}^{t-1} u_i \sum_{k=0}^{\infty} b_{kt+i+1} q_{kt+i+\ell}\sqrt{d}, \hbox{ and }\\
F_u(S^{\ell}(R(n\sqrt{d}))) &= \sum_{i=0}^{t-1} u_i \sum_{k=0}^{\infty} b_{kt+i+1} \beta_{kt+i+\ell}.
\end{align*}
\end{prop}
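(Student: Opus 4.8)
The plan is to combine the two building blocks that have been set up: first, that $R$ embeds the Ostrowski representation of $n$ into $C$ in the sense of Corollary~\ref{cor:T}, i.e.\ if $\sum_k b_{k+1}q_k$ is the Ostrowski representation of $n$ and $c=R(n\sqrt d)$, then $T(q_k\sqrt d, c)=b_{k+1}$ for all $k$; and second, that $S$ shifts the $T$-values by one, as in Lemma~\ref{lem:STconnection}. Iterating the latter gives $T(q_k\sqrt d, S^l(c)) = T(q_{k-l}\sqrt d, c)$ for $k\geq l$ (and the $T$-value is $0$ for $k<l$, since $S$ maps $C$ into $C$ and each $S_i$ inserts a zero in the bottom coordinate). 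Writing $c^{(l)}:=S^l(R(n\sqrt d))$, this means $T(q_k\sqrt d, c^{(l)}) = b_{k-l+1}$ when $k\geq l$ and $0$ otherwise.

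Next I would unwind the definition of $\Sigma_u$ and $F_u$ in terms of the $T$-values. The point is that each $c^{(l)}_i\in C_i\subseteq B_i^s$ is a tuple whose $j$-th coordinate, via Lemma~\ref{lem:propbj} and Fact~\ref{fact:EOstr}, has Ostrowski digits in $\{0,1\}$ supported on indices congruent to $i$ mod $t$; the defining condition of $C_i$ forces these supports to be nested. Consequently, for $z=q_k\sqrt d$ with $k\equiv i\bmod t$, exactly $T(q_k\sqrt d, c^{(l)}_i)$ of the coordinates $c^{(l)}_{i,1},\dots,c^{(l)}_{i,s}$ satisfy $E_1(z, f(c^{(l)}_{i,j}))$, namely $j=1,\dots,T(q_k\sqrt d,c^{(l)})$. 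Since the $j$-th Ostrowski digit of $c^{(l)}_{i,j}$ at index $k$ is thus $1$ exactly when $j\le T(q_k\sqrt d,c^{(l)})$, summing over $j$ recovers $\sum_{j=1}^s (\text{digit}) = T(q_k\sqrt d,c^{(l)})$. Now apply the definition of $f$ (which replaces $q_k$ by $\beta_k$ coordinatewise in Ostrowski representations) together with the fact that each $c^{(l)}_{i,j}\in \N\sqrt d$ equals $\sum_k (\text{its digit}_{k+1}) q_k\sqrt d$: expanding $\sum_{j=1}^s c^{(l)}_{i,j}$ over the index $k$ and using that only $k\equiv i\bmod t$ contribute, one gets $\sum_{j=1}^s c^{(l)}_{i,j} = \sum_{k=0}^\infty T(q_k\sqrt d, c^{(l)}) q_k\sqrt d$ with the sum effectively over $k\equiv i\bmod t$, and likewise $\sum_{j=1}^s f(c^{(l)}_{i,j}) = \sum_{k=0}^\infty T(q_k\sqrt d, c^{(l)}) \beta_k$. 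Substituting $T(q_k\sqrt d, c^{(l)}) = b_{k-l+1}$ for $k\ge l$ (zero otherwise), reindexing $k = k't+i+l$, and multiplying by $u_i$ and summing over $i$ yields exactly the two displayed formulas.

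The main obstacle I expect is the bookkeeping in the middle step: correctly tracking which indices $k$ contribute to which coordinate $c_i$ (those $\equiv i \bmod t$), verifying that the nestedness condition in the definition of $C_i$ really makes $\sum_{j=1}^s (\text{digit of } c_{i,j} \text{ at } k)$ equal to $T(q_k\sqrt d, c)$ rather than something smaller or larger, and checking the reindexing $k \mapsto kt+i+l$ lines up so that the digit $b_{kt+i+1}$ of $n$ is attached to $q_{kt+i+l}\sqrt d$ (resp.\ $\beta_{kt+i+l}$) after $l$ shifts. I would handle this by first proving the $l=0$ case as a clean lemma — essentially unwinding $\Sigma_u(R(n\sqrt d))$ and $F_u(R(n\sqrt d))$ using Corollary~\ref{cor:T} and the structure of $C_i$ — and then deducing the general case purely formally from Lemma~\ref{lem:STconnection} by induction on $l$, since $S$ commutes with the whole construction and merely relabels $q_k\sqrt d \mapsto q_{k+1}\sqrt d$ in the $T$-values. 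The convergence/well-definedness of the infinite sums is not an issue because $n$ is a natural number, so $b_{k+1}=0$ for all large $k$ and every sum in sight is finite.
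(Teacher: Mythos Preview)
Your proposal is correct and follows essentially the same route as the paper's own proof: use Corollary~\ref{cor:T} to read off $T(q_k\sqrt d,R(n\sqrt d))=b_{k+1}$, iterate Lemma~\ref{lem:STconnection} to pass to $S^l(R(n\sqrt d))$, then exploit the nestedness condition in the definition of $C_i$ to convert $\sum_{j=1}^s c_{i,j}$ (resp.\ $\sum_{j=1}^s f(c_{i,j})$) into a sum of $T$-values times $q_k\sqrt d$ (resp.\ $\beta_k$), and finally reindex. The only organizational difference is that the paper argues directly for arbitrary $l$ rather than first isolating the case $l=0$ and then inducting; your two-step version is a harmless (and arguably cleaner) repackaging of the same computation.
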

\begin{proof} By Corollary \ref{cor:T}, we have that $b_{k+1} = T(q_k\sqrt{d},R(n\sqrt{d}))$, for all $k \in \N$.
By Lemma \ref{lem:STconnection}, $T(q_k\sqrt{d},R(n\sqrt{d})) = T(q_{k+\ell}\sqrt{d},S^\ell(R(n\sqrt{d})))$ for all $k\in \N$. For ease of notation, denote $S^\ell(R(n\sqrt{d}))$ by $c=(c_0,\dots,c_{t-1})$.
Then we have for each $k\in \N$, $i\in \{0,\dots,t-1\}$ and $j\in \{1,\dots,s\}$ that
\[E_1(q_{kt+i+\ell}\sqrt{d},c_{i,j}) \hbox{ if and only if } b_{kt+i+1} \leq j.\]
Hence
\begin{align*}
 \sum_{j=1}^{s} c_{i,j} =\sum_{j=1}^{s} \sum_{k} |\{ j :  E_1(q_{kt+i}\sqrt{d},c_{i,j})\}| q_{kt+i+\ell}\sqrt{d}=  \sum_{k} b_{kt+i+1} q_{kt+i+\ell}\sqrt{d}.
\end{align*}
With the same argument, the reader can check that
\[
\sum_{j=1}^{s} f(c_{i,j})=\sum_{k} b_{kt+i+1} \beta_{kt+i+\ell}.
\]
We can easily deduce the statement of the Lemma from the definitions of $\Sigma$ and $F$.
\end{proof}

\subsection*{Proof of Theorem D} In this subsection, we will give a proof of Theorem D. When we say that for a real number $b\in \R$ and a subset $X$ the restriction of $\lambda_b$ to $X$ is definable, we just mean that the graph of the restriction $\lambda_b|_X$ is definable.\newline

\noindent Here is an outline how we proceed to prove Theorem D: we first combine Fact \ref{eq:multbyc} and Corollary \ref{cor:pq} with technology developed in the previous subsection, to show that the restrictions of $\lambda_{\sqrt{d}}$ to $\N$ and to $f(\N\sqrt{d})$ are definable. Using arguments from \cite{H-Twosubgroups} we conclude that $\lambda_{\sqrt{d}}$ is definable.

\begin{lem}\label{lem:restoI} Let $n\in \N$. Then
\[
f(n\sqrt{d}) = (-1)^m(\frac{q_m}{\sqrt{d}-a_0} +q_{m-1}) \cdot F_{(1,\dots,1)}(S^m(R(n\sqrt{d}))).
\]
\end{lem}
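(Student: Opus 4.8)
The plan is to compute $f(n\sqrt d)$ directly from its definition as the Ostrowski sum $\sum_k b_{k+1}\beta_k$ and then to recognize that sum as a rescaled $m$-shift. First I would apply Proposition~\ref{prop:gshift} with $u=(1,\dots,1)$ and $l=m$: this gives immediately
\[
F_{(1,\dots,1)}(S^m(R(n\sqrt d))) = \sum_{i=0}^{t-1}\sum_{k=0}^{\infty} b_{kt+i+1}\,\beta_{kt+i+m}.
\]
Since the double sum $\sum_{i=0}^{t-1}\sum_k$ with the index $kt+i$ simply runs over all nonnegative integers exactly once, the right-hand side is nothing but $\sum_{j=0}^{\infty} b_{j+1}\beta_{j+m}$.

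The crux of the argument is then Equation~\eqref{eq:multbyc}, which says $\zeta_1\cdots\zeta_{m+1}\cdot\beta_{j+m} = (-1)^m\beta_j$, equivalently $\beta_{j+m} = (-1)^m(\zeta_1\cdots\zeta_{m+1})^{-1}\beta_j$. Substituting this into the sum above yields
\[
F_{(1,\dots,1)}(S^m(R(n\sqrt d))) = \frac{(-1)^m}{\zeta_1\cdots\zeta_{m+1}}\sum_{j=0}^{\infty} b_{j+1}\beta_j = \frac{(-1)^m}{\zeta_1\cdots\zeta_{m+1}}\, f(n\sqrt d),
\]
the last equality being the definition of $f$. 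Rearranging gives $f(n\sqrt d) = (-1)^m(\zeta_1\cdots\zeta_{m+1})\cdot F_{(1,\dots,1)}(S^m(R(n\sqrt d)))$, and Fact~\ref{fact:productzeta} identifies $\zeta_1\cdots\zeta_{m+1} = q_m\sqrt d + q_{m-1} + a_0 q_m$, which is exactly the claimed formula.

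One technical point that deserves care: to invoke Proposition~\ref{prop:gshift} and to manipulate the sums term by term, I should make sure the relevant series converge absolutely, so that reindexing $\sum_{i}\sum_k b_{kt+i+1}(\cdots) = \sum_j b_{j+1}(\cdots)$ is legitimate. This follows from the standard estimate $|\beta_k| \le 1/q_{k+1}$ together with the fact that only finitely many $b_k$ are nonzero (as $n\in\N$), so in fact all sums are finite — there is no real convergence issue, only a bookkeeping one. The only genuine obstacle, then, is purely organizational: one must be careful that the index $kt+i$ as $(i,k)$ ranges over $\{0,\dots,t-1\}\times\N$ is a bijection with $\N$, so that the grouped sum really does reconstitute $f(n\sqrt d)$ and no terms are lost or double-counted. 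Given Proposition~\ref{prop:gshift}, Equation~\eqref{eq:multbyc}, and Fact~\ref{fact:productzeta}, the rest is immediate.
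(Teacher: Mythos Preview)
Your proposal is correct and follows essentially the same route as the paper: both proofs combine Proposition~\ref{prop:gshift} (with $u=(1,\dots,1)$ and $l=m$), Equation~\eqref{eq:multbyc}, and Fact~\ref{fact:productzeta} to relate $f(n\sqrt d)=\sum_k b_{k+1}\beta_k$ to the shifted sum $\sum_k b_{k+1}\beta_{k+m}$. The only difference is the direction of the computation---you start from $F_{(1,\dots,1)}(S^m(R(n\sqrt d)))$ and work back to $f(n\sqrt d)$, while the paper starts from $f(n\sqrt d)$---and you make explicit the reindexing $\sum_i\sum_k b_{kt+i+1}(\cdots)=\sum_j b_{j+1}(\cdots)$ that the paper leaves implicit.
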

\begin{proof} Let $\sum_k b_{k+1} q_k$ be the Ostrowski representation of $n$. By  Fact \ref{eq:multbyc}, Fact \ref{fact:productzeta} and Proposition \ref{prop:gshift}
\begin{align*}
f(n\sqrt{d}) &= \sum_k b_{k+1} \beta_k = (-1)^m(\frac{q_m}{\sqrt{d}-a_0} +q_{m-1})  \sum_{k} b_{k+1} \beta_{k+m}\\
&=(-1)^m(\frac{q_m}{\sqrt{d}-a_0} +q_{m-1})  \cdot F_{(1,\dots,1)}(S^m(R(n\sqrt{d}))).
\end{align*}
\end{proof}

\begin{cor}\label{cor:restoI} The restriction of $\lambda_{\sqrt{d}}$ to $f(\N\sqrt{d})$ is definable.
\end{cor}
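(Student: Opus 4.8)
The plan is to read off the claimed definability directly from Lemma~\ref{lem:restoI}, which already expresses $f(n\sqrt d)$ as an explicit $\Q(\sqrt d)$-scalar multiple of a definable quantity. More precisely, fix an element $y \in f(\N\sqrt d)$; by definition of $f$ there is $n \in \N$ with $y = f(n\sqrt d)$, and this $n$ (equivalently $n\sqrt d \in \N\sqrt d$) is unique since $f$ is injective (distinct natural numbers have distinct Ostrowski representations, hence distinct images $\sum_k b_{k+1}\beta_k$ by the uniqueness in Fact~\ref{ostrowskireal}). So the assignment $y \mapsto n\sqrt d$ is a well-defined function on $f(\N\sqrt d)$, and it is definable: $n\sqrt d$ is the unique element of $\N\sqrt d$ with $f(n\sqrt d) = y$, and both $\N\sqrt d$ and the graph of $f$ are definable (the latter by \cite[Lemma 4.3]{H-Twosubgroups}, recalled in the Preliminaries).

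First I would note that $\lambda_{\sqrt d}$ applied to $y = f(n\sqrt d)$ can be rewritten using Lemma~\ref{lem:restoI}. Write $m$ for the period length and $c := (-1)^m\bigl(q_m\sqrt d + q_{m-1} + a_0 q_m\bigr) \in \Q(\sqrt d)$; note $c \neq 0$. Then Lemma~\ref{lem:restoI} says $y = c \cdot F_{(1,\dots,1)}(S^m(R(n\sqrt d)))$, so
\[
\sqrt d \cdot y \;=\; \sqrt d \cdot c \cdot F_{(1,\dots,1)}(S^m(R(n\sqrt d))).
\]
Now $\sqrt d\, c = (-1)^m\bigl(q_m d + (q_{m-1}+a_0q_m)\sqrt d\bigr) \in \Q(\sqrt d)$ as well, say $\sqrt d\, c = \alpha + \beta\sqrt d$ with $\alpha,\beta \in \Q$. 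Hence
\[
\sqrt d \cdot y \;=\; \alpha \cdot F_{(1,\dots,1)}(S^m(R(n\sqrt d))) \;+\; \beta \cdot \lambda_{\sqrt d}\!\bigl(F_{(1,\dots,1)}(S^m(R(n\sqrt d)))\bigr),
\]
but this still refers to $\lambda_{\sqrt d}$. The cleaner route is to avoid reintroducing $\sqrt d$: from $y = c\cdot w$ with $w := F_{(1,\dots,1)}(S^m(R(n\sqrt d)))$ and $c^{-1} \in \Q(\sqrt d)$, we get $w = c^{-1} y$, and $\lambda_{\sqrt d}$ on $\Q(\sqrt d)\cdot y$ is determined once we know its value on two $\Q$-linearly independent elements. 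So instead I would argue as follows: the graph of $\lambda_{\sqrt d}\restriction f(\N\sqrt d)$ is the set of pairs $(y, y')$ such that there exists $n\sqrt d \in \N\sqrt d$ with $f(n\sqrt d) = y$ and $y' = c\cdot F_{(1,\dots,1)}(S^m(R((\lfloor q_1\sqrt d\rfloor\text{-shift of }n)\sqrt d)))$ — i.e.\ I would also establish the companion identity for $\sqrt d\cdot f(n\sqrt d)$ in terms of a further $S$-shift, exactly parallel to Lemma~\ref{lem:restoI}, using that $\sqrt d \cdot \beta_k = -\beta_{k-1} + a_0\beta_k + \cdots$ via the recurrence $\beta_{k+1} = a_{k+1}\beta_k + \beta_{k-1}$ together with Fact~\ref{fact:beta}.

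The main obstacle, and the only real content, is this companion identity: expressing $\sqrt d \cdot f(n\sqrt d) = \sqrt d \sum_k b_{k+1}\beta_k$ as a $\Q$-linear combination of finitely many shifted sums $\sum_k b_{kt+i+1}\beta_{kt+i+l}$, which is where Corollary~\ref{cor:pq} (the linear relation between the $p_k$'s and $q_k$'s over $\Q$) and Equation~\eqref{eq:multbyc} enter — one uses $q_k\sqrt d = p_k + \beta_k$, the relation $d q_{km} = a_0 p_{km} + p_{km-1}$ of Fact~\ref{fact:pq}, and Corollary~\ref{cor:pq} to trade multiplication by $\sqrt d$ for a combination of $m$-shifts and rational scalings. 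Once that identity is in hand, it is built from the definable objects $f$, $R$, $S$, $F_u$ (with $u$ ranging over the relevant fixed rational tuples), $\N\sqrt d$, and scalar multiplication by rationals, all available in $\Cal R_{\sqrt d}$; so the graph of $\lambda_{\sqrt d}\restriction f(\N\sqrt d)$ is definable, which is the claim. I expect the bookkeeping with the residues $i \bmod t$ and the index shifts $l$ to be the fiddly part, but no new idea beyond Proposition~\ref{prop:gshift} and Corollary~\ref{cor:pq} should be required.
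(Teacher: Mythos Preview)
You touch the correct argument and then abandon it. The sentence ``from $y = c\cdot w$ \dots\ and $c^{-1} \in \Q(\sqrt d)$, we get $w = c^{-1} y$'' is already the whole proof; no companion identity is required. Writing $a:=q_m$ and $b:=q_{m-1}+a_0q_m$, Lemma~\ref{lem:restoI} together with the injectivity and definability of $f$ and the definability of $R$, $S$, $F_{(1,\dots,1)}$ shows that the map $y \mapsto (-1)^m(a\sqrt d+b)^{-1}y$ is definable on $f(\N\sqrt d)$. Since $(a\sqrt d+b)^{-1}=\dfrac{a\sqrt d-b}{a^2d-b^2}$ has nonzero $\sqrt d$-part, one solves for $\sqrt d$ as a $\Q$-linear combination of $1$ and $(a\sqrt d + b)^{-1}$:
\[
\lambda_{\sqrt d}(x)\;=\;\frac{a^2d-b^2}{a}\,\lambda_{(a\sqrt d+b)^{-1}}(x)\;+\;\frac{b}{a}\,x,
\]
a rational combination of two functions already definable on $f(\N\sqrt d)$. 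This is exactly the paper's proof.

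The route you label ``the main obstacle, and the only real content'' --- establishing a separate identity for $\sqrt d\sum_k b_{k+1}\beta_k$ via further shifts and invoking Corollary~\ref{cor:pq} --- is therefore unnecessary, and the reference to Corollary~\ref{cor:pq} is misplaced: that relation between the $q_k$'s and the $p_k$'s is the tool for the restriction of $\lambda_{\sqrt d}$ to $\N$ (Lemma~\ref{lem:restoN} and Corollary~\ref{cor:restoN}), not to $f(\N\sqrt d)$. Your alternative might be pushed through, but the sketch you give does not actually carry it out, and there is no reason to: the elementary algebra above finishes the argument in two lines.
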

\begin{proof} 
Let $a,b \in \Q$ be such that
\[
\frac{q_m}{\sqrt{d}-a_0} +q_{m-1} = a \sqrt{d} + b.
\]
By Lemma \ref{lem:restoI} and the injectivity of $f$, the restriction of $\lambda_{(a\sqrt{d}+b)^{-1}}$ to $f(\N\sqrt{d})$ is definable.
Since $(a\sqrt{d}+b)^{-1} = \frac{a\sqrt{d}-b}{a^2d-b^2}$, we have
\[
\lambda_{\sqrt{d}}(x) = \frac{a^2d-b^2}{a}\lambda_{(a\sqrt{d}+b)^{-1}}(x) + \frac{b}{a}x.
\]
Hence the restriction of $\lambda_{\sqrt{d}}$ to $f(\N\sqrt{d})$ is definable.
\end{proof}

\begin{lem}\label{lem:restoN} There are $v,w \in \Q^t$ such that for every  $n \in \N$
\[
n =  \Sigma_{v}(S(R(n\sqrt{d}))) - F_{v}(S(R(n\sqrt{d}))) +  \Sigma_{w}(R(n\sqrt{d})) - F_{w}(R(n\sqrt{d})).
\]
\end{lem}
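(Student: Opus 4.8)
The plan is to recover $n$ from the Ostrowski data stored in $R(n\sqrt d)$ by inverting the passage to the Ostrowski representation, using Corollary \ref{cor:pq} to rewrite the denominators $q_j$ of the convergents in terms of the numerators $p_j$. Write $\sum_k b_{k+1} q_k$ for the Ostrowski representation of $n$. The observation that makes everything fit together is that, by Definition \ref{def:beta}, $q_j\sqrt d-\beta_j=p_j$ for every $j$; consequently, for any choice of rationals $u_i$, the difference $\Sigma_u(\cdot)-F_u(\cdot)$ evaluated on a shift of $R(n\sqrt d)$ produces precisely a $\Q$-linear combination of the numerators $p_j$ patterned along residue classes modulo $t$ (by Proposition \ref{prop:gshift}), while Corollary \ref{cor:pq} says that the denominators $q_j$ — and hence $n=\sum_k b_{k+1}q_k$ itself — can be written in exactly that form.

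First I would pass from the period-$m$ relations of Corollary \ref{cor:pq} to period-$t$ relations. Since $t=\max\{m,2\}$ we have $m\mid t$, so, extending the vectors furnished by Corollary \ref{cor:pq} periodically, there are $v,w\in\Q^t$ such that $q_{kt+i}=v_i\,p_{kt+i+1}+w_i\,p_{kt+i}$ for all $k\in\N$ and all $i\in\{0,\dots,t-1\}$. Grouping the Ostrowski representation of $n$ by the residue of its index modulo $t$ and substituting this identity gives
\[
n=\sum_{i=0}^{t-1}v_i\sum_{k}b_{kt+i+1}\,p_{kt+i+1}\;+\;\sum_{i=0}^{t-1}w_i\sum_{k}b_{kt+i+1}\,p_{kt+i},
\]
where all the sums are finite because the Ostrowski representation of $n$ is finite.

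Next I would read off the two double sums using $p_j=q_j\sqrt d-\beta_j$. Proposition \ref{prop:gshift} applied with $l=1$ and the vector $v$ yields $\Sigma_v(S(R(n\sqrt d)))-F_v(S(R(n\sqrt d)))=\sum_i v_i\sum_k b_{kt+i+1}\,p_{kt+i+1}$; running the same computation as in the proof of Proposition \ref{prop:gshift} with no shift applied (starting from $b_{k+1}=T(q_k\sqrt d,R(n\sqrt d))$, which is Corollary \ref{cor:T}) yields $\Sigma_w(R(n\sqrt d))-F_w(R(n\sqrt d))=\sum_i w_i\sum_k b_{kt+i+1}\,p_{kt+i}$. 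Adding these two identities and comparing with the displayed expression for $n$ completes the proof.

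I do not expect a genuine obstacle: all of the substance is already contained in Corollary \ref{cor:pq} (the relation between the $p_j$ and $q_j$ coming from the continued fraction of $\sqrt d$) and in Proposition \ref{prop:gshift} (the effect of the shift $S$ on $\Sigma_u$ and $F_u$). The only points that need a little care are the divisibility $m\mid t$, which is what allows the period-$m$ relations to be viewed as period-$t$ relations compatible with the definitions of $C$, $\Sigma_u$, $F_u$ and $S$, and the remark that the computation behind Proposition \ref{prop:gshift} also applies without any shift, which is what produces the terms involving $R(n\sqrt d)$ directly.
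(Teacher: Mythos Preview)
Your argument is correct and is essentially identical to the paper's own proof: both take the vectors $v,w$ from Corollary~\ref{cor:pq}, group the Ostrowski representation of $n$ by residues, substitute $p_j=q_j\sqrt d-\beta_j$, and then read off the result from Proposition~\ref{prop:gshift}. You are in fact slightly more explicit than the paper on two cosmetic points (the passage from $\Q^m$ to $\Q^t$ via $m\mid t$, and the observation that the computation behind Proposition~\ref{prop:gshift} also covers the unshifted case), which the paper leaves implicit.
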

\begin{proof} Let $v,w \in \Q^t$ be given by Corollary \ref{cor:pq}. Note that $p_k = q_k\sqrt{d} - \beta_k$.
By Proposition \ref{prop:gshift}
\begin{align*}
n &= \sum_{k=0}^{\infty} b_{k+1} q_k = \sum_{i=0}^{t-1} \sum_{k=0}^{\infty} b_{kt+i+1} q_{kt+i}\\
&=  \sum_{i=0}^{t-1} \sum_{k=0}^{\infty} b_{kt+i+1} (v_i \cdot p_{kt+i+1} + w_i \cdot p_{kt+i})\\
&=  \sum_{i=0}^{t-1} \sum_{k=0}^{\infty} b_{kt+i+1} \Big(v_i (q_{kt+i+1}\sqrt{d} - \beta_{kt+i+1}) + w_i (q_{kt+i+1}\sqrt{d} - \beta_{kt+i+1})\Big)\\
&=  \Sigma_{v}(S(R(n\sqrt{d}))) - F_{v}(S(R(n\sqrt{d}))) +  \Sigma_{w}(R(n\sqrt{d})) - F_{w}(R(n\sqrt{d})).
\end{align*}
\end{proof}

\begin{cor}\label{cor:restoN} The restriction of $\lambda_{\sqrt{d}}$ to $\N$ is definable.
\end{cor}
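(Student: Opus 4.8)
The plan is to read off the statement directly from Lemma \ref{lem:restoN}, whose right-hand side is an explicit composition of maps that have already been shown to be definable in $\Cal R_{\sqrt{d}}$. Concretely, $R : \N\sqrt{d} \to C$ is definable, $S : C \to C$ is definable, and for every fixed tuple $u \in \Q^t$ the maps $\Sigma_u : C \to \R$ and $F_u : C \to \R$ are definable. Hence, for the particular $v, w \in \Q^t$ supplied by Corollary \ref{cor:pq} and used in Lemma \ref{lem:restoN}, the function
\[
\psi : \N\sqrt{d} \to \R, \qquad n\sqrt{d} \mapsto \Sigma_{v}(S(R(n\sqrt{d}))) - F_{v}(S(R(n\sqrt{d}))) + \Sigma_{w}(R(n\sqrt{d})) - F_{w}(R(n\sqrt{d}))
\]
is definable, and by Lemma \ref{lem:restoN} it satisfies $\psi(n\sqrt{d}) = n$ for every $n\in\N$.

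The remaining steps are bookkeeping. First, $\psi$ is injective and has image exactly $\N$, since $\psi(n\sqrt{d}) = n$. Second, because $\Z\sqrt{d}$ is a basic predicate of $\Cal R_{\sqrt{d}}$, the set $\N\sqrt{d} = \{ x \in \Z\sqrt{d} : x \geq 0 \}$ is definable. Therefore the set
\[
\{ (n, z) \in \R^2 : z \in \N\sqrt{d} \ \wedge \ \psi(z) = n \}
\]
is definable, and by the previous paragraph it equals $\{ (n, n\sqrt{d}) : n \in \N \}$, which is precisely the graph of the restriction of $\lambda_{\sqrt{d}}$ to $\N$. Swapping the two coordinates of the definable graph of $\psi$ is all that is needed to pass from the definable map $n\sqrt{d}\mapsto n$ to the restriction of $\lambda_{\sqrt{d}}$.

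I do not expect a genuine obstacle in this corollary: all the substance has already been invested in constructing $C$, $R$, $S$, $\Sigma_u$, $F_u$ and in verifying the arithmetic identity of Lemma \ref{lem:restoN} (which itself rests on Corollary \ref{cor:pq} together with $p_k = q_k\sqrt{d} - \beta_k$). The only points that require a moment's care are checking that the composite $\psi$ is legitimately formed on the domain $\N\sqrt{d}$, and observing that recovering $n\mapsto n\sqrt{d}$ from the definable function $n\sqrt{d}\mapsto n$ is harmless since it amounts to transposing coordinates in a definable relation.
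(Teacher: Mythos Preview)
Your argument is correct and matches the paper's proof: both observe that Lemma~\ref{lem:restoN} yields a definable map $n\sqrt{d}\mapsto n$ on $\N\sqrt{d}$ (the paper calls this the restriction of $\lambda_{\sqrt{d}^{-1}}$), and then pass to its inverse---what you describe as transposing coordinates of the graph---to obtain $\lambda_{\sqrt{d}}\restriction\N$. Your version simply spells out more explicitly that the right-hand side of Lemma~\ref{lem:restoN} is a composite of already-definable maps.
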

\begin{proof} By Lemma \ref{lem:restoN} the restriction of $\lambda_{\sqrt{d}^{-1}}$ to $\N\sqrt{d}$ is definable. Since $\lambda_{\sqrt{d}}$ is the inverse function of $\lambda_{\sqrt{d}^{-1}}$ and $\lambda_{\sqrt{d}^{-1}}(\sqrt{d}\N)=\N$, it follows that the restriction of $\lambda_{\sqrt{d}}$ to $\N$ is definable.
\end{proof}

\begin{proof}[Proof of Theorem D] Here we follow the argument in the proof of \cite[Theorem 5.5]{H-Twosubgroups}. First note that it is enough to define $\lambda_{\sqrt{d}}$ on $\R_{\geq 0}$. Let $Q : \N + f(\N\sqrt{d}) \to \R$ map $m + f(n\sqrt{d})$ to $\lambda_{\sqrt{d}}(m) + \lambda_{\sqrt{d}}(f(n\sqrt{d}))$. It is immediate that $Q$ is well-defined and that $Q$ and $\lambda_{\sqrt{d}}$ agree on the domain of $Q$. By Corollary \ref{cor:restoI} and Corollary \ref{cor:restoN}, $Q$ is definable.
Since $\N + f(\N\sqrt{d})$ is dense in $[1-\sqrt{d},\infty)$ and multiplication by $\sqrt{d}$ is continuous, the graph of $\lambda_{\sqrt{d}}$ on $[1-\sqrt{d},\infty)$ is the topological closure of the graph of $Q$ in $\R^2$.
Thus the restriction of $\lambda_{\sqrt{d}}$ to $\R_{\geq 0}$ is definable.
\end{proof}

\section{Conclusion}

This paper solves the question left open in \cite{H-Twosubgroups} whether the theory of $(\R,<,+,\Z,\lambda_a)$ is decidable whenever $a$ is a quadratic irrational number. We achieve this by showing that $(\R,<,+,\Z,\Z\sqrt{d})$ defines $\lambda_{\sqrt{d}}$ whenever $d\in \Q$. Since the theory of $(\R,<,+,\Z,\Z\sqrt{d})$ is known to be decidable by \cite[Theorem A]{H-Twosubgroups}, we can conclude that the theory of $(\R,<,+,\Z,\lambda_a)$ is indeed decidable when $a$ is quadratic.\newline

\noindent We finish with a few remarks about related results and open questions.

\subsection*{1} We do not know whether Theorem D holds when $\sqrt{d}$ is replaced by an arbitrary real number $a$, even in the case when $a$ is quadratic. By \cite[Theorem A]{H-Twosubgroups} we know for quadratic $a$ that the theory of $(\R,<,+,\Z,\Z a)$ is decidable. However, when $a$ is non-quadratic not much is known.

\subsection*{2} Let $a \in \R\setminus \Q$. Let $x^{a} : \R \to \R$ map $t$ to $t^a$ if $t>0$ and to $0$ otherwise. An isomorphic copy of $\Cal S_a$ is definable in the structure $(\R,<,+,\cdot,2^{\Z},x^{a})$. But by \cite[Theorem 1.3]{discrete} the latter structure defines $\Z$ and hence its theory is undecidable, even if $a$ is quadratic.

\subsection*{3} Questions considered in this paper can also be asked for $\Q$ instead of $\Z$. By Robinson \cite{julia} the structure $(\R,<,+,\cdot,\Q)$ defines $\Z$ and therefore its theory is undecidable. On the other hand, $(\R,<,+,\Q)$ is modeltheoretically very well behaved, see van den Dries \cite{densepairs}, and its theory is decidable. So here we can also ask how many traces of multiplication can be added to the latter structure without destroying its tameness? By recent work of Block Gorman, Hieronymi and Kaplan in \cite{BHK}, $(\R,<,+,\Q,\lambda_a)$ is model-theoretically tame for every $a\in \R$. Furthermore, the theory of  $(\R,<,+,\Q,\lambda_a)$  is decidable as long as $\Q(a)$ has a computable presentation as an ordered field, and the question whether a finite subset of $\Q(a)$ is $\Q$-linearly independent is decidable.

  \bibliographystyle{plain}
  \bibliography{hieronymi}

\end{document}